\documentclass[a4paper,reqno]{amsart}

\usepackage[utf8]{inputenc}
\usepackage[english]{babel}
\usepackage[T1]{fontenc}
\usepackage{microtype}
\usepackage{amsmath,amssymb,amsthm}
\usepackage{thmtools,thm-restate}
\usepackage{mathtools}
\usepackage{upgreek}
\usepackage{orcidlink}
\usepackage{hyperref}
\hypersetup{hidelinks}
\hypersetup{breaklinks=true}
\usepackage[nameinlink]{cleveref}
\usepackage{suffix}

\setlength\parindent{0pt}
\usepackage{enumitem} 
\usepackage{amsaddr} 

\title[{A simple way to well-posedness of a DDE from cell biology}]{A simple way to well-posedness in $H^{1}$ of a delay differential equation from cell biology}
\author[B.~Aigner]{Bernhard Aigner\orcidlink{0009-0009-8252-162X}}
\thanks{supported by the state of Saxony via a graduate student stipend}
\address[B.~Aigner]{Institut f\"{u}r Angewandte Analysis\\
  TU Bergakademie Freiberg\\
  Institut für Angewandte Analysis\\
  Pr\"{u}ferstr. 9, 09599 Freiberg\\
  Germany}
\email[B.~Aigner]{bernhard.aigner@doktorand.tu-freiberg.de}

\author[M.~Waurick]{Marcus Waurick\orcidlink{0000-0003-4498-3574}}
\address[M.~Waurick]{Institut f\"{u}r Angewandte Analysis\\
  TU Bergakademie Freiberg\\
  Institut für Angewandte Analysis\\
  Pr\"{u}ferstr. 9, 09599 Freiberg\\
  Germany}
\email[M.~Waurick]{marcus.waurick@math.tu-freiberg.de}
\date{\today}

\DeclarePairedDelimiterX{\norm}[1]{\lVert}{\rVert}{#1}
\DeclarePairedDelimiterX{\abs}[1]{\lvert}{\rvert}{#1}
\DeclarePairedDelimiterX{\scprod}[2]{(}{)}{#1\delimsize| #2}
\newcommand{\e}{\mathrm{e}} 
\newcommand{\dd}{\mathrm{d}} 
\newcommand{\dx}[1][x]{\,\dd{}#1}


\newtheorem*{definition*}{Definition}
\newtheorem*{theorem*}{Theorem}
\newtheorem*{proposition*}{Proposition}
\newtheorem*{lemma*}{Lemma}
\newtheorem*{corollary*}{Corollary}
\newtheorem*{remark*}{Remark}

\newtheorem{definition}{Definition}[section]
\newtheorem{theorem}[definition]{Theorem}
\newtheorem{proposition}[definition]{Proposition}
\newtheorem{lemma}[definition]{Lemma}
\newtheorem{corollary}[definition]{Corollary}
\newtheorem{remark}[definition]{Remark}
\newtheorem{assumptions}[definition]{Assumptions}

\begin{document}

\begin{abstract}
  We present an application of recent well-posedness results in the theory of delay differential equations for ordinary differential equations~\cite{Waurick2023} to a generalized population model for stem cell maturation. The weak approach using Sobolev-spaces we take allows for a larger class of initial prehistories and makes checking the requirements for well-posedness of such a model considerably easier compared to previous approaches. In fact the present approach is a possible means to guarantee that the solution manifold is not empty, which is a necessary requirement for a $\mathcal{C}^{1}$-approach to work.
\end{abstract}

\maketitle

\section{Introduction}
A \textit{delay differential equation (DDE)} is a differential equation (or system of differential equations) in which the derivative not only depends on the current state of the system, but on a prior state or even on a prior time horizon of the state variable. Specifically we are interested in differential equations of the form
\begin{alignat*}{3}
  x'(t) &= G(t,x_{t}) &\quad& \text{for}\ t \geq 0, \\
  x(t) &= \Phi(t) &&\text{for}\ t \in [-h,0]
\end{alignat*}
where we use the standard notation $x_{t}(s)=x(t+s)$ for $s<0$. $G$ is at this point any right-hand side that depends potentially on a prior state of $x$ or even a longer history $x_{t}$. Therefore the system above is referred to as a \textit{state-dependent} DDE. Even more specifically this system can be regarded as the equivalent of an ordinary differential equation for DDEs, since the left-hand side is just the time derivative of the state. In the setting of (state-dependent) DDEs a so-called prehistory  $\Phi$ takes the place of a traditional initial value in IVPs to prevent systems from being unterdetermined. For information on general results and techniques for DDEs we refer to \cite{Diekmann1995, Corduneanu2016}.\\
The present article is an application of a recent finding from \cite{Waurick2023} to a generalized population model for stem cell growth that has been introduced in \cite{Doumic2011} and has subsequently been studied in articles such as \cite{Getto2016, Getto2021}. We will prove well-posedness of the mathematical model as a system of DDEs. In doing so we improve on previous results. Indeed in \cite{Getto2016} substantially more regularity on the individual functions is required, whereas in \cite{Getto2021}, complementing \cite{Getto2016}, the structural hypothesis of positivity needs to be imposed on initial prehistories. Here we simultaneously generalize both of these results. For a detailed comparison see section two for \cite{Getto2016} and section four for \cite{Getto2021}. We emphasize that we do not need to include the criteria necessary to apply the concept of solution manifolds as introduced in \cite{Walther2003}, which is useful in the context of linearized stability, but introduces complications in the context of solution theory. Thus \cite{Walther2003} may rather be viewed as a regularity theory for DDEs.\\
We start by (re-)introducing the model in the next section as well as stating all assumptions on the model. In section three we present our results with full proofs. Section four contains a brief discussion and comparison to other research.

\section{The Model}
We start by immediately presenting the model we want to investigate, then explain its contents and elaborate on where it comes from. The model of concern is the following system of differential equations
\begin{equation}
  \label{fullPDE}
  \begin{split}
    w'(t)&=q(v(t))w(t) \\
    v'(t)&=\tfrac{\gamma(v(t-\tau(v_{t}))) g(x_{2},v(t))w(t-\tau(v_{t}))}{g(x_{1},v(t-\tau(v_{t})))} \e^{\int_{0}^{\tau(v_{t})}(d-D_{1}g)(y(s,v_{t}),v(t-s))\dx[s]}-\mu v(t)
  \end{split}
\end{equation}
where we understand $v_{t}\colon \,[-h,0]\rightarrow \mathbb{R}, \,s \mapsto v(t+s)$. Notice the terms $t-\tau(v_{t})$ in the second equation. The delay $\tau$ depending on a prior history of the state $v_{t}$ maps to values in $[0,h]$, where $h$ is the backward time horizon. The times $t-\tau(v_{t})$ therefore denote instances in the past. This system appears in \cite{Getto2016}, is based on the model used in \cite{Doumic2011} and is obtained from the original transport equation there by integration along the characteristics. For details we refer to both articles. The system describes the growth of stem cells, originally in mammary glands. We provide a full table of all functions and parameters involved before we give a brief heuristic of the biology involved.
\begin{center}
  \begin{tabular}{|l|l|}
    \hline
    \textbf{Symbol} & \textbf{Description} \\
    \hline
    \hline
    $w\colon I \rightarrow \mathbb{R}$ & concentration of stem cells \\
    \hline
    $v\colon I \rightarrow \mathbb{R}$ & concentration of mature cells \\
    \hline
    $q\colon I \rightarrow \mathbb{R}^{+}$ & stem cell population net growth rate \\
    \hline
    $\gamma\colon I \rightarrow \mathbb{R}^{+}_{0}$ & unregulated maturation rate of stem cells \\
    \hline
    $g \colon J\times I \rightarrow \mathbb{R}^{+}_{0}$ & regulated maturation rate of progenitor cells \\
    \hline
    $y\colon [-h,0]\times \mathcal{M} \rightarrow \mathbb{R}$ & population size of mature cells \\
    \hline
    $d \colon J\times I\rightarrow \mathbb{R}^{+}_{0}$ & per cell net growth rate/progenitor net production rate \\
    \hline
    $\tau\colon \mathcal{M} \rightarrow \mathbb{R}^{+}_{0}$ & delay function \\
    \hline
    $x_{1}\in \mathbb{R}$ & initial maturity \\
    \hline
    $x_{2}\in \mathbb{R}, x_{1}<x_{2}$ & full maturity \\
    \hline
    $x\in [x_{1},x_{2}]$ & maturity of stem cells \\
    \hline
    $\mu \geq 0$ & decay rate/mortality rate of mature cells \\
    \hline
  \end{tabular}
\end{center}
Here $I\subseteq \mathbb{R}$ is an open interval, $J$ is an open interval containing $[x_{1},x_{2}]$ and $\mathcal{M}$ is a suitable function space of functions mapping $[-h,0]$ into $I$ ($H^{1}(-h,0;I)$ - see \cpageref{H1} for a definition - in this publication and $\mathcal{C}^{1}([-h,0],I)$ in other sources).\\
For a given function $\varphi$ defined on an interval $[-h,0]$ the function $y(.,\varphi)=y_{\varphi}$ is a solution of the ordinary differential equation
\begin{equation}
  \label{ODEy}
  \begin{split}
    y_{\varphi}'(s)&=-g(y_{\varphi}(s),\varphi(-s)) \\
    y_{\varphi}(0)&=x_{2}
  \end{split}
\end{equation}
Note that $y_{\varphi}$ is defined going backward in time. $s$ then can be interpreted as the time it takes to elvolve $y_{\varphi}(s)$ to $x_{2}$. The delay term $\tau(\varphi)$ is subsequently defined as the solution of
\begin{equation}
  \label{Eqtau}
  y_{\varphi}(\tau(\varphi))=x_{1}.
\end{equation}
$\tau$ and $y_{\varphi}$ are well-defined as we will clarify after stating our assumptions. Prior to that we would like to devote some space briefly outlining what the model describes.

\subsection{The biological setting}
The first line in \Cref{fullPDE} is almost self-explanatory: the current change in the concentration of stem cells is assumed to be proportional to the current concentration of stem cells. The corresponding proportionality rate is the stem cell population growth rate, which in turn is assumed to exclusively depend on the concentration of mature cells. $q$ reflects stem cell division rate in the absence of mature cells, self-renewal in the absence of mature cells, mortality rate of stem cells and regulation constants. For details we refer to \cite{Alarcon2011}, where the authors represent $q$ in a more explicit form. In particular, $q$ as modeled there is a bounded $\mathcal{C}^{\infty}$-function.\\
The second line in \Cref{fullPDE} asks for  more explanation: First consider the progenitor phase, which is the phase in which stem cells commit themselves to maturation. Let us consider the amount of progenitor cells passing through this level per time. The inflow of progenitors into the mature cells is assumed to be linear with respect to the concentration of stem cells times a factor depending on the concentration of mature cells. At time $t$ this flow is $\beta(v(t))w(t)$. We again refer to \cite{Alarcon2011} for a more detailed explanation and expression of $\beta$. We note that $\beta$ as introduced there is a bounded $\mathcal{C}^{\infty}$-function again. The inflow of progenitors into the mature cell population is modelled with a delay. Asssuming a stem cell just enters the progenitor phase and does neither divide nor die, it fully matures in a finite time $\tau$. The maturation process is regulated by mature cells, hence it is assumed that the delay only depends on the history of the concentration of mature cells $v_{t}$. Thus if full maturity of a cell is reached in time $t$, $\tau(v_{t})$ is the time spent as progenitor cells and $t-\tau(v_{t})$ is the time the progenitor phase was entered. This in turn means that the inflow of stem cells into the progenitor phase that corresponds to those cells that fully mature at time $t$ is
\begin{equation*}
  \beta(v(t-\tau(v_{t})))w(t-\tau(v_{t})),
\end{equation*}
that is, if it is assumed that the number of cells would not change during the progenitor phase. This change in numbers is accounted for by a progenitor net population growth factor $\mathcal{F}(\varphi)$, where $\varphi$ denotes the history experienced by the time cells reach full maturity. Now one only needs to account for the outflow of mature cells, which is the number of mature cells that die at a given time, which is assumed to be a constant mortality rate $\mu$ mulitiplied by the current concentration of mature cells. All in all one arrives at the change of concentration of mature cells
\begin{equation*}
  v'(t)=\beta(v(t-\tau(v_{t})))w(t-\tau(v_{t}))\mathcal{F}(v_{t})-\mu v(t).
\end{equation*}
The specific expression for $\mathcal{F}$ can be obtained by introducing the population net growth rate $d$ yielding the equation
\begin{alignat*}{2}
  &\tilde{\mathcal{F}}' (s) = d(\text{some maturity level},\text{some prehistory})\tilde{\mathcal{F}} (s) &\quad& t>0 \\
  &\tilde{\mathcal{F}} (0) = 1 && {}
\end{alignat*}
and setting $\mathcal{F}(\psi)\coloneq \tilde{\mathcal{F}}(\tau(\psi))$. We will not go into details here and are content with the observation that this equation gives rise to an exponential term.\\
It remains to justify \Cref{ODEy}. This ODE is immediately obtained by introducing the function $g$ as the regulated maturation speed of a progenitor cell, which is simply the change of the number of mature cells. For more details, explanations and explicit expressions we refer to \cite{Alarcon2011}.\\
Models of this sort apparently arise generally in the modelling of stem cell growth models (to the best of our knowledge this model first arose in the context of mammary stem cell growth), but appear to have wider applicability, for instance in the modelling of tumor growth. For more context we refer to \cite{Doumic2011} and the references therein. We note that the functions provided there have explicit expressions, are smooth and bounded. We will study a more general mathematical scenario in the following.

\subsection{The mathematical model}
We will not investigate the model as in \Cref{fullPDE}, but rather an equivalent formulation that is more suitable for our approach, that has been studied for instance in \cite{Getto2016,Getto2021} and is very close to the outline of the deduction of the model anyway. We also wish to account for more mathematical generality. For this purpose we denote
\begin{alignat}{3}
  ~\beta &\colon \, I \rightarrow \mathbb{R}^{+} &\quad& z\mapsto \frac{\gamma(z)}{g(x_{1},z)} \notag\\
  \mathcal{G} &\colon \, \mathcal{M} \rightarrow \mathbb{R}^{+} && \psi\mapsto g(x_{2},\psi(0))\e^{\int_0^{\tau(\psi)} (d-D_{1}g))(y(t,\psi),\psi(-s)) \dx[s]} \label{DefG}
  \end{alignat}
Using these functions we can define
\begin{equation*}
  \begin{split}
    F:\,H^{1}(-h,0;I^{2})&\rightarrow \mathbb{R}^{2} \\
    (\varphi,\psi) &\mapsto \begin{pmatrix}
      q(\psi(0))\varphi(0) \\
      \beta(\psi(-\tau(\psi)))\varphi(-\tau(\psi))\mathcal{G}(\psi)-\mu\psi(0) \end{pmatrix}
  \end{split}
\end{equation*}
We understand the \textit{Sobolev space} $H^{1}$ with values in $U\subseteq \mathbb{R}^{n}$ as
\begin{equation*}
  \label{H1}
  H^{1}(-h,0;U)\coloneq \{f\in L^{2}(-h,0;U)\colon \, u'\in L^{2}(-h,0;U)\}
\end{equation*}
where the weak (time) derivative is defined in the variational sense and the space is equipped with the norm
\begin{equation*}
  \norm{u}_{H^{1}(-h,0;U)}\coloneq \sqrt{\norm{u}_{L^{2}(-h,0;U)}^{2}+\norm{u'}_{L^{2}(-h,0;U)}^{2}}
\end{equation*}
We assume familiarity with the concept of weak derivatives and Sobolev spaces, for the uninitiated good first resources are \cite[ch.~5.2]{Evans2010} and \cite[ch.~6.1]{Waurick2022}.\\
Utilizing this new function we can rewrite \Cref{fullPDE} as $x'(t)=F(x_{t})$, substituting $x=(w,v)$. Henceforth we will only investigate the following initial value problem (IVP)
\begin{equation}
  \label{IVP}
  \begin{matrix}
    \begin{split}
      x'(t) &= F(x_{t}) \\
      x_{0}(t) &= \Phi(t) \\
    \end{split}
    &
    \begin{split}
      t&\geq 0 \\
      t&\in [-h,0]\\
    \end{split}
  \end{matrix}
\end{equation}
It should be noted, that using Sobolev spaces a priori begs the question how one would interpret the equations above, in particular the initial condition. This is in fact no problem at all though, since the equalities can be interpreted pointwise, because at least in the one dimensional case, every (equivalence class of a) function in $H^{1}$ admits a unique continuous representative:
\begin{theorem}[Sobolev-embedding]
  \label{Sobolev}
  Let $-\infty<a<b<\infty$. Then every $f\in H^{1}(a,b)$ admits a continuous representative and
  \begin{equation*}
    \norm{f}_{\mathcal{C}([a,b])}\leq ((b-a)^{\frac{1}{2}}+(b-a)^{-\frac{1}{2}})\norm{f}_{H^{1}(a,b)}
  \end{equation*}
\end{theorem}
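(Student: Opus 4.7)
My plan is to first establish the existence of the continuous representative and then derive the estimate from a suitable integral identity.

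\textbf{Step 1 (continuous representative).} Given $f \in H^{1}(a,b)$, I would define $g(x) \coloneq \int_{a}^{x} f'(s)\dx[s]$, which is well-defined and absolutely continuous since $f' \in L^{2}(a,b) \subseteq L^{1}(a,b)$. A direct check shows $g' = f'$ in the weak sense, so $(f-g)' = 0$ as a distribution on $(a,b)$. By the standard fact that distributions with vanishing derivative on an interval are constants (provable via mollification and a partition-of-unity / test-function argument), $f - g = c$ almost everywhere for some $c \in \mathbb{R}$. Hence $\tilde f \coloneq g + c$ is a continuous (in fact absolutely continuous) representative of $f$ on $[a,b]$.

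\textbf{Step 2 (integral identity).} For the continuous representative $\tilde f$ and any $x, y \in [a,b]$ I would use the fundamental theorem
\begin{equation*}
  \tilde f(x) = \tilde f(y) + \int_{y}^{x} f'(s)\dx[s].
\end{equation*}
Averaging this identity over $y \in [a,b]$ yields
\begin{equation*}
  \tilde f(x) = \frac{1}{b-a}\int_{a}^{b} f(y)\dx[y] + \frac{1}{b-a}\int_{a}^{b}\int_{y}^{x} f'(s)\dx[s]\dx[y].
\end{equation*}

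\textbf{Step 3 (estimate by Cauchy--Schwarz).} For the first term, Cauchy--Schwarz gives $\lvert \int_{a}^{b} f(y)\dx[y]\rvert \leq (b-a)^{1/2}\norm{f}_{L^{2}(a,b)}$. For the second, I would bound $\lvert \int_{y}^{x} f'(s)\dx[s]\rvert \leq (b-a)^{1/2}\norm{f'}_{L^{2}(a,b)}$ uniformly in $y$, so the double integral is at most $(b-a)^{3/2}\norm{f'}_{L^{2}(a,b)}$. Dividing by $b-a$ produces
\begin{equation*}
  \abs{\tilde f(x)} \leq (b-a)^{-1/2}\norm{f}_{L^{2}(a,b)} + (b-a)^{1/2}\norm{f'}_{L^{2}(a,b)} \leq \bigl((b-a)^{1/2}+(b-a)^{-1/2}\bigr)\norm{f}_{H^{1}(a,b)},
\end{equation*}
the last inequality simply using $\norm{f}_{L^{2}},\norm{f'}_{L^{2}} \leq \norm{f}_{H^{1}}$. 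Taking the supremum over $x \in [a,b]$ finishes the bound.

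The only genuinely delicate point is Step 1: namely, justifying that a weak derivative zero implies a.e.\ constancy without circularity. I would handle this via mollification (the mollified $f-g$ is smooth, has pointwise vanishing derivative, hence is constant, and convergence in $L^{2}$ transfers the constancy to $f-g$). Everything else is a direct application of Cauchy--Schwarz.
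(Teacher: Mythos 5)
Your proof is correct, and the constant you obtain matches the one in the statement exactly. Note that the paper does not prove this theorem at all: it labels it a standard result and defers to the references (Adams--Fournier, Thm.~4.12; Arendt--Urban, Thm.~4.9), so there is no in-paper argument to compare against. Your argument is the classical one: represent $f$ via the indefinite integral of its weak derivative (using that a distribution with vanishing derivative on an interval is a.e.\ constant), average the fundamental-theorem identity over the base point, and apply Cauchy--Schwarz to each term; the final step $(b-a)^{-1/2}\norm{f}_{L^{2}}+(b-a)^{1/2}\norm{f'}_{L^{2}}\leq((b-a)^{1/2}+(b-a)^{-1/2})\norm{f}_{H^{1}}$ follows since each $L^{2}$-norm is dominated by the $H^{1}$-norm. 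The one point you flag yourself --- that weak derivative zero implies a.e.\ constancy --- is indeed the only nontrivial ingredient, and the mollification argument you sketch handles it without circularity. Nothing is missing.
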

This is a standard result found in any standard reference, e.g. \cite[thm.~4.12]{Adams2003}. For a proof of we refer to \cite[thm.~4.9]{Arendt2015}. We shall now state our assumptions that we will assume throughout the remainder of this article.
\begin{assumptions}[Basic assumptions]
  \label{Assumptions}
  Let $I$ and $J$ be open intervals with $J$ containing $[x_{1},x_{2}]$. We assume
  \begin{enumerate}[leftmargin=3ex]
  \item $q, \beta\colon \,I\rightarrow \mathbb{R}$ are locally Lipschitz-continuous.
  \item $d\colon \,J\times I\rightarrow \mathbb{R}$ is continuous and locally Lipschitz-continuous with respect to the second component.
  \item $g\colon \,J\times I\rightarrow \mathbb{R}$ for some $b,\epsilon,K\in \mathbb{R}$ satisfies:
  \begin{itemize}[leftmargin=3ex]
    \item $g$ is Lipschitz-continuous and partially differentiable with respect to the first component.
    \item $D_{1}g$ is locally Lipschitz-continuous with respect to the second component.
    \item $0<\epsilon \leq g(x,y)\leq K$ on $\overline{\mathcal{B}(x_{2},b)} \times I$ and $x_{2}-x_{1}\in (0,\frac{b}{K}\epsilon)$.
  \end{itemize}
  \end{enumerate}
  Let $h\coloneqq\frac{b}{K}$.
\end{assumptions}
A remark on notation: In this article the notation $\mathcal{B}_{X}(x,r)$ will always refer to the open ball around some element $x\in X$ of a Banach space $X$ with radius $r>0$. When we want to refer to the (optimal) Lipschitz-constant $L$ of a function $f\colon X\rightarrow \mathbb{R}^{d}$ on a particular subset $C\subseteq X$ we write $\norm{f}_{\operatorname{Lip},C}\coloneq \underset{x,y\in C}{\operatorname{sup}}\frac{\norm{f(x)-f(y)}}{\abs{x-y}}$. When the spaces are clear we will drop the indices.\\
To prove global existence later we will impose the additional requirements that we will list here for the sake of compactness.
\begin{assumptions}[Further assumptions]
  \label{Furhter assumptions}
  In addition to \Cref{Assumptions} let
  \begin{itemize}[leftmargin=3ex]
    \item $q$ be bounded.
    \item $d-D_{1}g$ be bounded.
    \item $\beta$ satisfy a linear growth bound.
  \end{itemize}
\end{assumptions}
The boundedness of $g$ as in the \Cref{Assumptions} is always assumed in this model, cf. \cite{Getto2016,Doumic2011}. It certainly seems to make sense from a biological point of view to assume bounds on the maturation rate $g$; after all it seems reasonable to assume that a certain minimum number of cells always die (and need therefore be replaced). At the same time assuming an upper bound is obvious. We point out that the backward time horizon $h$ is not arbitrary. It needs to be at least $\frac{b}{K}$ for \Cref{ODEy} to have a solution, which we will argue in due time.\\
We point out that this model has been studied before and local existence and uniqueness of solutions has been shown, albeit under different and sometimes considerably stronger assumptions. To showcase the substantial weakening in our assumption and for comparison's sake we list the assumptions in \cite{Getto2016}, which we will frequently treat as a comparison article throughout our investigation. A comparison to the more up to date article \cite{Getto2021} we reserve for the last section.
\begin{assumptions}[Assumptions in {\cite[thm.~1.13]{Getto2016}}]
  \label{OldAssumptions}
  Let $I$ be an open interval containing $0$ and $J$ be an open interval containing $[x_{1},x_{2}]$. Assume
  \begin{enumerate}[leftmargin=3ex]
  \item $d, D_{1}g$ are continuously differentiable.
  \item $\beta, q$ are continuously differentiable.
  \item $g$ satisfies for some $K,\epsilon,b\in \mathbb{R}$:
  \begin{itemize}[leftmargin=3ex]
    \item $\overline{\mathcal{B}(x_{2},b)} \subseteq J$ and $g:\, J\times I \rightarrow \mathbb{R}$ is continuously differentiable.
    \item $D_{1}g(x,y)$ is bounded on $\overline{\mathcal{B}(x_{2},b)} \times I$
    \item $0<\epsilon \leq g(x,y)\leq K$ on $\overline{\mathcal{B}(x_{2},b)} \times I$ and $x_{2}-x_{1}\in (0,\frac{b}{K}\epsilon)$
  \end{itemize}
  \item \upshape{(solution manifold condition):}\\Let $\Phi \in \mathcal{C}^{1}([-h,0],\mathbb{R}_{+}^{2})$ satisfy $F(\Phi)=\Phi'(0)$.
  \end{enumerate}
  For global existence suppose further:
  \begin{enumerate}[leftmargin=3ex]
  \setcounter{enumi}{4}
    \item $\operatorname{sup}_{(x,z)\in\overline{\mathcal{B}(x_{2},b)}\times I}\abs{D_{1}g(x,z)}<\frac{K}{b}$
    \item $d$ is bounded on $\overline{\mathcal{B}(x_{2},b)}\times I$.
    \item $D_{2}g, D_{1}d, D_{2}d, D_{1}D_{1}g, D_{2}D_{1}g$ are bounded on $\overline{\mathcal{B}(x_{2},b)}\times A$ whenever $A\subseteq I$ is bounded.
    \item $\gamma, q$ are Lipschitz-continuous on bounded sets and bounded.
  \end{enumerate}
\end{assumptions}
There are two major differences comparing these two sets of assumptions:  For one our regularity assumptions are weaker, and more importantly, we do not require the solution manifold assumption. To provide some context: The concept of the so-called \textit{solution manifold} (cf. \cite{Walther2003}) was introduced for the study of linearized stability in DDEs. Without going into details the rough idea in this context is to consider for some open $U\subseteq \mathcal{C}^{1}([-h,0],\mathbb{R}^{n})$ the set of consistent initial histories $X\coloneq \{\varphi\in U\colon \varphi'(0)=F(\varphi)\}$ for $F$ at least continuous. Then a necessary condition for the solvability of \Cref{IVP} is that $X$ is non-empty. Under certain differentiability assumptions it can be shown that then $X$ is a continuously differentiable submanifold of $U$ and for each $\varphi\in X$ there exists $t_{\varphi}>0$ and a unique non-continuable solution $x_{\varphi}\colon [-h,t_{\varphi})\rightarrow \mathbb{R}^{n}$ of the IVP \Cref{IVP}. We refer to \cite[thm~3.2.1]{Hartung2006}. The condition of $X$ being non-empty must be fulfilled in \Cref{OldAssumptions} to obtain even local existence of solutions in \cite{Getto2016}. This is why in said article the authors demand $0\in I$ as this condition serves to show that the zero function is contained in the solution manifold. Here we are able to disperse this condition altogether; this improves applicability of our well-posedness result. Since we are able to also remove condition (5), we provide a more accessible approach for numerical computations as it is virtually impossible to calculate suprema with a computer.\\

Using our assumptions we can immediately observe that at least the differential equation \Cref{ODEy} is well-posed and that both $y$ and $\tau$ are well-defined.
\begin{remark}[$\tau$ and $y$ are well-defined]
  \label{ywelldefined}
  We observe that \Cref{ODEy} for a given $\varphi \in \mathcal{C}([-h,0];I)$ satisfies the condition of the Picard-Lindel\"{o}f theorem (cf. \cite[thm.~2.2]{Teschl2012}) since $g$ is Lipschitz-continuous with respect to the first variable (interpret the RHS $-g(y(s),\varphi(s))$ as $\tilde{g}(y(s),s)\coloneq -g(y(s),\varphi(s))$) independent of the second variable. The solution is therefore also unique. $\tau$ then is implicitly defined via \Cref{Eqtau}. To see that such $\tau$ is well-defined note
  \begin{itemize}[leftmargin=3ex]
    \item $y(0)=x_{2}$ and the derivative $y'$ is strictly decreasing with a slope between $-\epsilon$ and $-K$ because of the assumption on $g$.
    \item Since by assumption $x_{2}-x_{1}\in(0,\frac{b}{K}\epsilon)$ the solution $y$ of \Cref{ODEy} satisfies a Lipschitz-condition on $\overline{\mathcal{B}(x_{2},b)}$, the solution exists at least up to $x_{1}$ appealing to the global version of the Picard-Lindel\"{o}f theorem (cf. \cite[thm.~2.2]{Teschl2012}).
  \end{itemize}
  Then appealing to the intermediate value theorem for continuous functions yields existence of $\tau(\varphi)$ and, because $y$ is strictly decreasing, also uniqueness.
\end{remark}
We also observe that $y$ is quite well-behaved in general:
\begin{remark}[properties of $y$]
  \label{yexpbdd}
  Note that $y$ as a solution of the ODE \Cref{ODEy} is continuously differentiable for a fixed history $\varphi\in H^{1}(-h,0;\mathbb{R})$ and we obtain an exponential growth bound for $y$:
  \begin{align*}
    \abs{y(t)}&\leq \abs{y(0)}+\abs[\bigg]{\int_0^t y'(s)\dx[s]}\\
              &= x_{2}+\abs[\bigg]{\int_0^t -g(y(s),\varphi(-s))}\dx[s]\\
              &\leq x_{2}+ \abs{g(x_{2},\varphi(0)}t + \int_0^t \abs[\big]{g(y(s),\varphi(-s))-g(x_{2},\varphi(0)}\dx[s]\\
              &\leq x_{2}+ \abs{g(x_{2},\varphi(0)}t + \int_0^t L\left(\abs{y(s)-x_{2}}+\abs{\varphi(-s)-\varphi(0)}\right)\dx[s]\\
              &\leq x_{2}+ \left(\abs{g(x_{2},\varphi(0)} + 2\norm{\varphi}_{\mathcal{C}([-h,0],\mathbb{R})}+L x_{2}\right)t + \int_0^t L\abs{y(s)}\dx[s]
  \end{align*}
  Appealing to Gr\"{o}nwall's lemma we obtain the estimate
  \begin{equation*}
    \abs{y(t)}\leq \left[x_{2}+ \left(\abs{g(x_{2},\varphi(0)} + 2\norm{\varphi}_{\mathcal{C}([-h,0],\mathbb{R})}+L x_{2}\right)t\right]\e^{Lt}
  \end{equation*}
  This means in particular that we have global existence of $y$ at least as long the histories remain finite in sup-norm. Utilizing the same technique we can also obtain a (local) Lipschitz-constant for $y$ depending on the histories:
  \begin{align*}
    \abs{y_{\varphi}(t)-y_{\psi}(t)}&= \abs[\bigg]{\int_0^t y_{\varphi}'(s)-y_{\psi}'(s)\dx[s]}\\
              &= \abs[\bigg]{\int_0^t g(y_{\psi}(s),\psi(-s))-g(y_{\varphi}(s),\varphi(-s))}\dx[s]\\
              &\leq \int_0^t L\left(\abs{y_{\varphi}(s)-y_{\psi}(s)}+\abs{\varphi(-s)-\psi(-s)}\right)\dx[s]\\
              &\leq L \norm{\varphi-\psi}_{\mathcal{C}([-h,0],\mathbb{R})}t + \int_0^t L\abs{y_{\varphi}(s)-y_{\psi}(s)}\dx[s]
  \end{align*}
  and therefore
  \begin{equation*}
    \abs{y_{\varphi}(t)-y_{\psi}(t)}\leq L \norm{\varphi-\psi}_{\mathcal{C}([-h,0],\mathbb{R})}t\e^{Lt}
  \end{equation*}
\end{remark}

\section{Results}
Our approach follows the orthodox route in the study of differential equations. We first would like to solve \Cref{IVP} in small neighbourhoods of the initial value (prehistory in our case), which is to say prove local existence and uniqueness of a solution there. After settling that matter, we will then proceed to maximal solutions at the end of this section. The key for local existence is the use of \cite[Thm~5.1]{Waurick2023}, that we will restate below. It can be viewed as a generalized version of the Picard-Lindel\"{o}f theorem for Hilbert spaces using an adapted version of Lipschitz-continuity, that we introduce first:
\begin{definition}
  A function $G\colon [0,T]\times H^{1}(-h,0;\mathbb{R}^{n})\rightarrow \mathbb{R}^{n}$ is called \textup{almost uniformly Lipschitz-continuous} if it is continuous and for all $\alpha>0$, there exists $L\geq0$ such that for all $u,v\in V_{\alpha}\coloneqq \{\psi \in H^{1}(-h,0;\mathbb{R}^{n}):\, \norm{\psi'}_{\infty}\leq \alpha \}$ and $t\in [0,T]$:
  \begin{equation*}
    \abs{G(t,u)-G(t,v)}_{\mathbb{R}^{n}}\leq L\norm{u-v}_{H^{1}(-h,0;\mathbb{R}^{n})}\
  \end{equation*}
\end{definition}
In other words almost uniform Lipschitz-continuity means that we demand regular/global Lipschitz-continuity with respect to the second variable, but only for those variables with bounded derivative (uniformly in time of course). Now we can restate from \cite{Waurick2023}:
\begin{theorem}
  \label{localFDE}
  Let $n\in \mathbb{N}$ and let $G:[0,T]\times H^{1}(-h,0;\mathbb{R}^{n})\rightarrow\mathbb{R}^{n}$ be almost uniformly Lipschitz-continuous. Then for all $\Phi\in H^{1}(-h,0;\mathbb{R}^{n})$ with bounded derivative there exists $0 < T_{0}\leq T$ and a unique $x:[-h,T_{0})\rightarrow\mathbb{R}^{n}$ such that for all $T_{1}\in(0,T_{0})$, the restriction $x|_{(-h,T_{1})}\in H^{1}(-h,T_{1};\mathbb{R}^{n})$ satisfies:
  \begin{alignat}{1}
    \begin{aligned}
    \label{FDE}
    x'(t)&=G(t,x_{t}) &\quad& t\in(0,T_{1}) \\
    x_{0}&=\Phi.
    \end{aligned}
  \end{alignat}
  Such an $x$ we call an \textup{$H^{1}$-solution} of \Cref{FDE}.
\end{theorem}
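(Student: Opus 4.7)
The plan is a contraction-mapping argument in $H^{1}(-h,T_{0};\mathbb{R}^{n})$. First I would reformulate \Cref{FDE} as an integral equation: a candidate $x \in H^{1}(-h,T_{0};\mathbb{R}^{n})$ solves \Cref{FDE} if and only if $x|_{[-h,0]} = \Phi$ and
\begin{equation*}
  x(t) = \Phi(0) + \int_{0}^{t} G(s,x_{s})\dx[s] \qquad (t \in [0,T_{0}]).
\end{equation*}
Fix $\alpha > \max\{\norm{\Phi'}_{L^{\infty}}, \abs{G(0,\Phi)}\} + 1$, let $L = L(\alpha)$ be the Lipschitz constant furnished by almost uniform Lipschitz-continuity, and consider
\begin{equation*}
  S_{T_{0}} \coloneqq \{x \in H^{1}(-h,T_{0};\mathbb{R}^{n}) : x|_{[-h,0]} = \Phi \text{ a.e.},\ \norm{x'}_{L^{\infty}(-h,T_{0})} \leq \alpha\}.
\end{equation*}
This set is non-empty (the constant extension by $\Phi(0)$ works since $\norm{\Phi'}_{\infty} \leq \alpha$), convex, and closed in $H^{1}$ (if $x_{n} \to x$ in $H^{1}$, pass to an a.e.\ convergent subsequence of the derivatives to preserve the $L^{\infty}$-bound). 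Define $\mathcal{T}\colon S_{T_{0}} \to H^{1}(-h,T_{0};\mathbb{R}^{n})$ by $\mathcal{T}(x)|_{[-h,0]} = \Phi$ and $\mathcal{T}(x)(t) = \Phi(0) + \int_{0}^{t} G(s,x_{s})\dx[s]$ for $t \geq 0$.

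Next I would verify self-mapping and contraction for $T_{0}$ sufficiently small. For the contraction, the decisive observation is that any two $x,y \in S_{T_{0}}$ coincide with $\Phi$ on $[-h,0]$, so $(x_{s} - y_{s})(r) = 0$ for $r \in [-h,-s]$ and $(x_{s}-y_{s})(r) = x(s+r) - y(s+r)$ for $r \in [-s,0]$; therefore
\begin{equation*}
  \norm{x_{s} - y_{s}}_{H^{1}(-h,0)} \leq \norm{x - y}_{H^{1}(0,T_{0})} \qquad (s \in [0,T_{0}]).
\end{equation*}
Combined with the almost uniform Lipschitz bound (legal since $x_{s},y_{s} \in V_{\alpha}$) this yields
\begin{equation*}
  \norm{\mathcal{T}x - \mathcal{T}y}_{L^{2}(0,T_{0})}^{2} \leq \tfrac{L^{2} T_{0}^{3}}{3}\norm{x-y}_{H^{1}}^{2}, \qquad \norm{(\mathcal{T}x - \mathcal{T}y)'}_{L^{2}(0,T_{0})}^{2} \leq L^{2} T_{0} \norm{x-y}_{H^{1}}^{2},
\end{equation*}
so $\mathcal{T}$ is a strict contraction for $T_{0}$ small. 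Banach's fixed point theorem then gives a unique $x \in S_{T_{0}}$ with $\mathcal{T}x = x$, i.e.\ the desired $H^{1}$-solution on $[-h,T_{0}]$. A standard Zornification (union of restrictions of local solutions, together with local uniqueness) then produces a unique maximal, non-continuable $H^{1}$-solution.

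The main obstacle I anticipate is establishing $\mathcal{T}(S_{T_{0}}) \subseteq S_{T_{0}}$, because the required derivative bound is in $L^{\infty}$ rather than merely $L^{2}$. Here one must argue that $\abs{(\mathcal{T}x)'(t)} = \abs{G(t,x_{t})} \leq \alpha$ uniformly in $x \in S_{T_{0}}$. Splitting $x_{t} - \Phi$ at $-t$ one controls the $L^{2}$-norms of both $x_{t} - \Phi$ and $(x_{t} - \Phi)' = x'(t+\cdot) - \Phi'$: on $[-t,0]$ both pieces are bounded by $2\alpha$ pointwise, producing a factor $\sqrt{t}$ after integration; on $[-h,-t]$ one must appeal to $L^{2}$-continuity of translation for $\Phi'$, which gives a non-quantitative modulus but still tends to $0$ as $t \to 0$. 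Together with the time-continuity of $s \mapsto G(s,\Phi)$, this shows $\abs{G(t,x_{t})} \leq \abs{G(0,\Phi)} + 1 \leq \alpha$ uniformly in $x \in S_{T_{0}}$ once $T_{0}$ is chosen small enough, closing the argument.
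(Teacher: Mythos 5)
Your argument is essentially correct, but it is not the proof the paper points to: \Cref{localFDE} is quoted from \cite{Waurick2023}, where the fixed-point problem is solved \emph{globally} on $[0,T]$ by running Morgenstern's variant of the Picard--Lindel\"{o}f argument in exponentially weighted Sobolev spaces --- the weight $\e^{-\rho t}$ makes the Picard operator a contraction for $\rho$ large, with no smallness condition on the time interval. You instead take the classical route of shrinking $T_{0}$ until the operator contracts in the unweighted $H^{1}$-norm. Both work; the weighted version avoids tracking how $T_{0}$ depends on $\Phi$ and dovetails with the evolutionary-equations framework of \cite{Waurick2022}, while yours is more elementary and self-contained. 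Your handling of the self-mapping property --- indeed the only place where the $L^{\infty}$-bound on the derivative, the time-continuity of $G$, and the $L^{2}$-continuity of translation of $\Phi'$ must all be used --- is the right argument, and the contraction estimates are correct (note that $\mathcal{T}x-\mathcal{T}y$ vanishes on $[-h,0]$, so contraction in $\norm{\cdot}_{H^{1}(0,T_{0})}$ is contraction in the full $H^{1}(-h,T_{0})$-norm, and the continuity of $s\mapsto G(s,x_{s})$ needed to read $x'(t)=G(t,x_{t})$ off the integral equation follows from \Cref{ThetaLip} together with the continuity of $G$).

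One point you should not wave away: Banach's theorem gives uniqueness only within $S_{T_{0}}$, whereas the theorem asserts uniqueness among \emph{all} $H^{1}$-solutions, whose derivatives need not obey your chosen bound $\alpha$. This is repairable but needs an argument: for any competing $H^{1}$-solution $y$ the function $t\mapsto G(t,y_{t})=y'(t)$ is continuous (again by \Cref{ThetaLip}) with limit $G(0,\Phi)$ as $t\to 0^{+}$, hence $\abs{y'}<\alpha$ on some $[0,\epsilon]$, so $y$ lies in $S_{\epsilon}$ and coincides with your fixed point there; a connectedness (``largest time of agreement'') argument then propagates the identity over the whole common interval of existence. Without this step the ``local uniqueness'' you invoke in the Zornification is not yet established.
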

The proof of this theorem is simply an adaption of Morgenstern's proof for the Picard-Lindel\"{o}f theorem, utilizing exponentially weighted Sobolev spaces, see \cite{Waurick2023} for a full proof. \\
\Cref{localFDE} now gives us a clear goal: What we need to prove for local existence and uniqueness of \Cref{IVP} is that $F$ as defined in the previous section is almost uniformly Lipschitz-continuous as a map from $H^{1}([-h,0],\mathbb{R}^{2})$ to $\mathbb{R}^{2}$. We state the main result:
\begin{theorem}[Local existence theorem]
  \label{localExistence}
  For a given $\Phi \in H^{1}(-h,0;\mathbb{R}^{2})$ with $\norm{\Phi'}_{\infty}<\infty$ there exists $T>0$ and a unique $x\colon [-h,T)\rightarrow \mathbb{R}^{2}$ such that for all $0<T_{0}<T$ the restriction $x|_{[-h,T_{0}]}\in H^{1}(-h,T_{0};\mathbb{R}^{2})$ and $x$ is a solution to the IVP \Cref{IVP} in the sense of \Cref{localFDE}.
\end{theorem}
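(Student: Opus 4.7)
The plan is to verify that $F$, viewed as a time-independent right-hand side $G(t,(\varphi,\psi))\coloneq F(\varphi,\psi)$, satisfies the hypotheses of \Cref{localFDE}, and then read off existence and uniqueness directly. Since $F$ is a priori defined only on $H^{1}(-h,0;I^{2})$ while \Cref{localFDE} is formulated on all of $H^{1}(-h,0;\mathbb{R}^{2})$, I would first fix a small $H^{1}$-ball $U$ around $\Phi$; by \Cref{Sobolev}, values of $\psi\in U$ remain in a fixed compact subset of $I^{2}$, so we may extend $q,\beta,g,d$ to globally Lipschitz functions (via McShane--Kirszbraun) without affecting $F$ on $U$. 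Since only local existence is sought, this extension is harmless, and we may henceforth argue as if $q,\beta,g,d$ are globally Lipschitz with constants inherited from their behaviour on the compact set where $U$ takes values.

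The easy half is $F_{1}(\varphi,\psi)=q(\psi(0))\varphi(0)$: point evaluation at $0$ is a bounded functional on $H^{1}$, $q$ is Lipschitz on the compact image set, and a standard product estimate yields the bound. The substantial half is the term
\begin{equation*}
  \beta(\psi(-\tau(\psi)))\,\varphi(-\tau(\psi))\,\mathcal{G}(\psi)
\end{equation*}
in $F_{2}$. I would establish, in order: (i) local Lipschitz continuity of $\psi\mapsto\tau(\psi)$, exploiting the defining equation $y_{\psi}(\tau(\psi))=x_{1}$, the derivative bound $y_{\psi}'\leq-\epsilon$ from \Cref{ywelldefined} (so that $\epsilon\abs{\tau_{1}-\tau_{2}}\leq\abs{y_{\psi_{1}}(\tau_{2})-y_{\psi_{2}}(\tau_{2})}$), combined with the Lipschitz estimate for $y$ proved in \Cref{yexpbdd}; (ii) Lipschitz control of the shifted evaluation $(\varphi,\psi)\mapsto\varphi(-\tau(\psi))$ on $V_{\alpha}$, via the splitting
\begin{equation*}
  \varphi_{1}(-\tau_{1})-\varphi_{2}(-\tau_{2})=\bigl(\varphi_{1}(-\tau_{1})-\varphi_{1}(-\tau_{2})\bigr)+\bigl(\varphi_{1}(-\tau_{2})-\varphi_{2}(-\tau_{2})\bigr),
\end{equation*}
where the first piece is bounded by $\alpha\abs{\tau_{1}-\tau_{2}}$ since $\varphi_{1}\in V_{\alpha}$ is $\alpha$-Lipschitz and the second by \Cref{Sobolev}; and (iii) local Lipschitz continuity of $\mathcal{G}$, using $\abs{\e^{a}-\e^{b}}\leq\max(\e^{a},\e^{b})\abs{a-b}$ together with step~(i) to handle the $\tau(\psi)$-dependent upper limit, \Cref{yexpbdd} to handle the integrand along $y(\argdot,\psi)$, and local Lipschitz continuity of $d-D_{1}g$ in its second slot plus continuity in its first. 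A Leibniz product estimate across the three factors, all of which are uniformly bounded on $U\cap V_{\alpha}$ because their arguments lie in compact sets, then yields the required almost uniform Lipschitz bound for $F_{2}$.

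The main obstacle is step~(ii): pointwise evaluation at the state-dependent argument $-\tau(\psi)$ is \emph{not} continuous on $H^{1}$ alone, because $H^{1}$-shifts are continuous only in $L^{2}$ and shift-then-evaluate at varying shifts has no uniform $H^{1}$-bound. The restriction to $V_{\alpha}$ is exactly what remedies this: the uniform bound $\norm{\varphi'}_{\infty}\leq\alpha$ upgrades pointwise continuity to an $\alpha$-Lipschitz bound with respect to the shift, which then couples cleanly with (i). This is also the structural reason that \Cref{localFDE} is tailored to neighbourhoods $V_{\alpha}$ rather than $H^{1}$-balls. Once almost uniform Lipschitz continuity of $F$ is assembled from (i)--(iii) and the easy bound on $F_{1}$, \Cref{localFDE} delivers the claimed $T>0$ and unique $H^{1}$-solution $x$ on $[-h,T)$, completing the proof.
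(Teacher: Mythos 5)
Your outline reproduces the paper's strategy almost verbatim: the same chain of auxiliary facts (Lipschitz continuity of $\tau$ via $y_{\psi}'\leq-\epsilon$ and the Lipschitz estimate for $y$ from \Cref{yexpbdd}; Lipschitz control of the state-dependent evaluation on $V_{\alpha}$ by splitting off the shift and using $\norm{\varphi'}_{\infty}\leq\alpha$; Lipschitz continuity of $\mathcal{G}$; product estimates for the two components), and your diagnosis of why $V_{\alpha}$ is the right class for step~(ii) matches the paper's \Cref{evLip}. Steps (i)--(iii) are correct and are exactly \Cref{tauLip}, \Cref{evtauLip} and \Cref{GLip}.

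There is, however, one genuine gap: your globalization step does not deliver the hypothesis of \Cref{localFDE}. Extending $q,\beta,g,d$ by McShane--Kirszbraun to globally Lipschitz functions is not enough, because almost uniform Lipschitz continuity requires a \emph{single} Lipschitz constant on all of $V_{\alpha}$, and $V_{\alpha}$ only bounds $\norm{\psi'}_{\infty}$, not $\norm{\psi}_{\infty}$ (it contains constant functions of arbitrarily large value). The product terms $q(\psi(0))\varphi(0)$ and $\beta(\psi(-\tau(\psi)))\varphi(-\tau(\psi))\mathcal{G}(\psi)$ therefore fail to be Lipschitz on $V_{\alpha}$ no matter how regular the extended coefficients are: in the standard splitting $\bigl(q(\psi(0))-q(\eta(0))\bigr)\zeta(0)$ the factor $\abs{\zeta(0)}$ is unbounded over $V_{\alpha}$. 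What your argument (and the paper's) actually establishes is Lipschitz continuity of $F$ on $V_{\alpha}\cap\mathcal{B}_{\mathcal{C}([-h,0],\mathbb{R}^{2})}(\Phi,\delta)$. The paper bridges the remaining distance with \Cref{Restriction}, which extends $F$ \emph{itself} (not merely its scalar ingredients) from such a set to a Lipschitz map on all of $H^{1}(-h,0;\mathbb{R}^{2})$ agreeing with $F$ near $\Phi$; only then is \Cref{localFDE} applicable, and since the solution stays in the sup-ball around $\Phi$ for small time, it solves the original problem. You need to insert this extension-of-$F$ step (or an equivalent cut-off in the sup-norm) before the final invocation of \Cref{localFDE}.
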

It has already been shown that $F$ admits a Lipschitz-continuity property (denoted by (sLb)) on bounded subsets of $\mathcal{C}^{1}([-h,0];\mathbb{R}^{2})$ in \cite[Lem~2.2]{Getto2016}, albeit under the stronger \Cref{OldAssumptions} utilizing different techniques.
In order to apply \Cref{localFDE} above we need to restrict ourselves not just to $V_{\alpha}$ but a $\norm{.}_{\infty}$-ball around the prehistory $\Phi$ as well as we shall see in the proof of our main theorem. This is not a problem though as we can always extend a function defined on such a small set to the whole space as the following result shows:
\begin{proposition}[{\cite[thm.~6.1]{Waurick2023}}]
  \label{Restriction}
  Let $f\colon \mathcal{C}([-h,0];\mathbb{R}^{n}) \supseteq \operatorname{dom}(f)\rightarrow \mathbb{R}^{m}$ be almost uniformly Lipschitz-continuous with $\operatorname{dom}(f) \subseteq \mathcal{C}([-h,0];\mathbb{R}^{n})$ open. Then for all Lipschitz-continuous $\phi_{0}\in \operatorname{dom}(f)$ there exists $F\colon H^{1}(-h,0;\mathbb{R}^{n})\rightarrow \mathbb{R}^{m}$ Lipschitz continuous and $\delta>0$ such that for all $\alpha>0$ there holds
  \begin{equation*}
    F|_{\mathcal{B}_{\mathcal{C}([-h,0])}(\phi_{0},\delta)\cap V_{\alpha}} = f|_{\mathcal{B}_{\mathcal{C}([-h,0])}(\phi_{0},\delta)\cap V_{\alpha}}
  \end{equation*}
\end{proposition}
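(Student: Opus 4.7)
The plan is to exhibit $F$ as the composition $f\circ R$ of the given $f$ with a Lipschitz retraction $R\colon H^{1}(-h,0;\mathbb{R}^{n})\to H^{1}(-h,0;\mathbb{R}^{n})$, constructed so that $R$ forces every input into a small neighbourhood of $\phi_{0}$ inside $\operatorname{dom}(f)$, while leaving the distinguished sets $\mathcal{B}_{\mathcal{C}([-h,0])}(\phi_{0},\delta)\cap V_{\alpha}$ pointwise invariant for every $\alpha>0$ simultaneously.

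First I would set up the localisation. Openness of $\operatorname{dom}(f)\subseteq\mathcal{C}([-h,0];\mathbb{R}^{n})$ supplies $\delta>0$ with $\overline{\mathcal{B}}_{\mathcal{C}([-h,0])}(\phi_{0},2\delta)\subseteq\operatorname{dom}(f)$; Lipschitz regularity of $\phi_{0}$ places it in $V_{\alpha_{0}}$ for $\alpha_{0}\coloneqq\norm{\phi_{0}'}_{\infty}$; and the almost uniform Lipschitz hypothesis (transferred to the $\mathcal{C}$-domain via the Sobolev embedding $H^{1}\hookrightarrow\mathcal{C}$ of \Cref{Sobolev}) then delivers constants $L_{\alpha}$ such that $\abs{f(u)-f(v)}\leq L_{\alpha}\norm{u-v}_{H^{1}}$ whenever $u,v\in V_{\alpha}\cap\overline{\mathcal{B}}_{\mathcal{C}([-h,0])}(\phi_{0},2\delta)$.

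Next I would define $R$ as a scalar-multiplier cutoff of the deviation from $\phi_{0}$, namely $R(\psi)\coloneqq\phi_{0}+\chi(\norm{\psi-\phi_{0}}_{\infty})\,(\psi-\phi_{0})$, where $\chi\colon[0,\infty)\to[0,1]$ is a $1$-Lipschitz function with $\chi=1$ on $[0,\delta]$ and $\chi=0$ on $[2\delta,\infty)$. Three properties must then be verified: (a) $R$ restricts to the identity on $\mathcal{B}_{\mathcal{C}([-h,0])}(\phi_{0},\delta)$, which delivers $F=f$ on $\mathcal{B}_{\mathcal{C}([-h,0])}(\phi_{0},\delta)\cap V_{\alpha}$ for every $\alpha>0$, precisely as claimed; (b) since $\norm{\cdot}_{\infty}\colon H^{1}\to\mathbb{R}$ is Lipschitz by \Cref{Sobolev}, the scalar prefactor $\chi(\norm{\psi-\phi_{0}}_{\infty})$ depends Lipschitz-continuously on $\psi\in H^{1}$, and a product-rule estimate shows $R$ is Lipschitz $H^{1}\to H^{1}$; (c) because $R(\psi)'=c\,\psi'+(1-c)\phi_{0}'$ with $c=\chi(\norm{\psi-\phi_{0}}_{\infty})\in[0,1]$ a pointwise convex combination, $R$ maps $V_{\alpha}$ into $V_{\max(\alpha,\alpha_{0})}$ and $R(H^{1})\subseteq\overline{\mathcal{B}}_{\mathcal{C}([-h,0])}(\phi_{0},2\delta)\subseteq\operatorname{dom}(f)$. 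Setting $F\coloneqq f\circ R$ therefore defines the extension.

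The main obstacle is upgrading to a single global $H^{1}$-Lipschitz constant for $F$: the retraction $R$ above is itself globally Lipschitz, but $f$ is only controlled with the $\alpha$-dependent constant $L_{\alpha}$, so $f\circ R$ is a priori only Lipschitz on each $V_{\alpha}$ individually, with constant $L_{\max(\alpha,\alpha_{0})}\cdot\operatorname{Lip}(R)$. To force the image of $R$ into one fixed $V_{\alpha^{\ast}}$ and obtain a uniform Lipschitz constant, one must additionally truncate $\norm{\psi'}_{\infty}$ in an $H^{1}$-Lipschitz manner, which is delicate because $\psi\mapsto\norm{\psi'}_{\infty}$ is not itself Lipschitz from $H^{1}$ to $\mathbb{R}$. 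This is the technical heart of \cite[thm.~6.1]{Waurick2023}, whose explicit construction must be invoked to close the argument.
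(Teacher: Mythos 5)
The paper itself does not prove this proposition; it is imported verbatim from \cite[thm.~6.1]{Waurick2023}, so there is no internal argument to compare your attempt against, and I can only judge it on its own terms. Judged that way, it has a genuine gap, and in fact your own step (b) is false as stated: the cut-off retraction $R(\psi)=\phi_{0}+\chi(\norm{\psi-\phi_{0}}_{\infty})(\psi-\phi_{0})$ is \emph{not} globally Lipschitz from $H^{1}$ to $H^{1}$. The product-rule estimate produces the cross term $\abs{\chi(\norm{\psi-\phi_{0}}_{\infty})-\chi(\norm{\eta-\phi_{0}}_{\infty})}\cdot\norm{\psi-\phi_{0}}_{H^{1}}$, and the factor $\norm{\psi-\phi_{0}}_{H^{1}}$ is unbounded on the region where $\chi$ is active: the sup-ball of radius $2\delta$ contains functions of arbitrarily large $H^{1}$-norm. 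Concretely, take $\psi=\phi_{0}+g_{N}$ with $g_{N}$ oscillatory, $\norm{g_{N}}_{\infty}=\tfrac{3}{2}\delta$ and $\norm{g_{N}}_{H^{1}}=N$, and perturb it by a fixed bump of height $\epsilon$ at a maximum of $g_{N}$; the perturbation costs $O(\epsilon)$ in $H^{1}$ but changes $\chi(\norm{\argdot}_{\infty})$ by $\approx\epsilon/\delta$, hence changes $R$ by $\approx(\epsilon/\delta)N$ in $H^{1}$. Letting $N\to\infty$ destroys any uniform constant. What your $R$ genuinely delivers is Lipschitz continuity on each $V_{\alpha}$ with an $\alpha$-dependent constant (on $V_{\alpha}$ the offending factor $\norm{\psi-\phi_{0}}_{H^{1}}$ is bounded in terms of $\alpha$ and $\delta$), so $f\circ R$ is \emph{almost uniformly} Lipschitz --- which happens to be all that the application of \Cref{localFDE} needs --- but it is not the globally Lipschitz $F$ the proposition asserts.

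Your closing paragraph correctly identifies that upgrading to a single global constant is the crux, but then defers it to the reference, so the proof is not closed; and the repair you sketch cannot close it. Truncating $\norm{\psi'}_{\infty}$ so as to force the image of the modification into one fixed $V_{\alpha^{\ast}}$ necessarily alters elements of $\mathcal{B}_{\mathcal{C}([-h,0])}(\phi_{0},\delta)\cap V_{\alpha}$ for $\alpha>\alpha^{\ast}$, and hence destroys the required identity $F=f$ there --- but the conclusion demands this identity for \emph{every} $\alpha>0$ with one and the same $\delta$. (The same tension explains why the difficulty is not merely technical: a globally Lipschitz $F$ agreeing with $f$ on $\mathcal{B}_{\mathcal{C}([-h,0])}(\phi_{0},\delta)\cap V_{\alpha}$ for all $\alpha$ forces $f$ to be uniformly $H^{1}$-Lipschitz on the union of these sets, which is strictly stronger than the almost uniform Lipschitz hypothesis; compare the $\alpha$-dependent constant in \Cref{evLip}.) So the missing ingredient is the actual extension mechanism of \cite[thm.~6.1]{Waurick2023}, and your outline does not contain a substitute for it.
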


Before we prove our main result we have to get some preliminaries out of the way and in particular we would like to remind the reader of some well-known facts, such as the following:
\begin{remark}[Fundamental theorem of calculus]
  \label{FTC}
  Since our $H^{1}$-functions are differentiable almost everywhere (cf. \cite[sec.~5.8.3]{Evans2010}) and because the Sobolev embedding \Cref{Sobolev} says that $H^{1}$-functions on a finite interval are continuous, they adhere to the fundamental theorem of calculus (cf. \cite[thm.~6.3.10]{Cohn1980}).
\end{remark}
We shall employ this fact in the following two lemmata:
\begin{lemma}
  \label{evLip}
  For any $\alpha>0$ the restricted evaluation mappping
  \begin{align*}
    \operatorname{ev}\colon \left(H^{1}(-h,0;\mathbb{R})\cap V_{\alpha}\right)\times [-h,0]&\rightarrow \mathbb{R} \\
    (\psi,s)&\mapsto\psi(s)
  \end{align*}
   is Lipschitz-continuous.
\end{lemma}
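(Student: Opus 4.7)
The plan is to estimate $|\operatorname{ev}(\psi_1,s_1) - \operatorname{ev}(\psi_2,s_2)|$ by the standard triangle-inequality split
\begin{equation*}
  |\psi_1(s_1) - \psi_2(s_2)| \leq |\psi_1(s_1) - \psi_1(s_2)| + |\psi_1(s_2) - \psi_2(s_2)|,
\end{equation*}
and bound the two summands separately, using the $V_\alpha$-constraint for the first and the Sobolev embedding for the second.

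For the first summand, I would invoke the fundamental theorem of calculus as recalled in \Cref{FTC}: since $\psi_1 \in V_\alpha$, its weak derivative satisfies $\|\psi_1'\|_{\infty} \leq \alpha$, so
\begin{equation*}
  |\psi_1(s_1) - \psi_1(s_2)| = \Bigl|\int_{s_2}^{s_1} \psi_1'(r)\dx[r]\Bigr| \leq \alpha |s_1 - s_2|.
\end{equation*}
For the second summand, applying the Sobolev embedding \Cref{Sobolev} to $\psi_1 - \psi_2$ on $(-h,0)$ yields
\begin{equation*}
  |\psi_1(s_2) - \psi_2(s_2)| \leq \|\psi_1 - \psi_2\|_{\mathcal{C}([-h,0])} \leq (h^{1/2} + h^{-1/2}) \|\psi_1 - \psi_2\|_{H^1(-h,0;\mathbb{R})}.
\end{equation*}

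Combining these, with $L \coloneq \max\{\alpha, h^{1/2} + h^{-1/2}\}$, gives
\begin{equation*}
  |\operatorname{ev}(\psi_1,s_1) - \operatorname{ev}(\psi_2,s_2)| \leq L \bigl(\|\psi_1 - \psi_2\|_{H^1(-h,0;\mathbb{R})} + |s_1 - s_2|\bigr),
\end{equation*}
which is precisely Lipschitz continuity with respect to the product norm on $(H^1(-h,0;\mathbb{R}) \cap V_\alpha) \times [-h,0]$.

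There is no real obstacle here; the only tiny subtlety is that the essential supremum bound $\|\psi_1'\|_\infty \leq \alpha$ lets us promote $\psi_1'$ to a genuine $L^\infty$-representative so that the integral estimate on $|\psi_1(s_1) - \psi_1(s_2)|$ is pointwise (which is fine because, by \Cref{Sobolev}, we are evaluating the continuous representative). The role of the restriction to $V_\alpha$ is exactly to guarantee the uniform bound on $\psi_1'$; without it, no such Lipschitz estimate in $s$ would be available for arbitrary $H^1$-functions.
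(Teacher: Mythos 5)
Your proposal is correct and follows essentially the same argument as the paper: the same triangle-inequality split, the $V_\alpha$-bound on the derivative for the increment in $s$, and the Sobolev embedding \Cref{Sobolev} for the increment in $\psi$, yielding the same constant $\max\{\alpha, h^{1/2}+h^{-1/2}\}$. The only cosmetic difference is that the paper reaches $\alpha\abs{s_1-s_2}$ via Cauchy--Schwarz on $\int \psi_1'$ rather than bounding $\psi_1'$ directly in $L^\infty$; both are equivalent here.
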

\begin{proof}
  For $\varphi,\psi\in H^{1}(-h,0;\mathbb{R})$ and $s,t\in[-h,0]$ we compute utilizing \Cref{FTC}:
  \begin{align*}
    \abs{\operatorname{ev}(\varphi,s)-\operatorname{ev}(\psi,t)}&=\abs{\varphi(s)-\psi(t)} \\
                                                                &\leq \abs{\varphi(s)-\varphi(t)}+\abs{\varphi(t)-\psi(t)} \\
                                                                &\leq \abs[\bigg]{\int_s^t \varphi'(\sigma) \dx[\sigma]} + \norm{\varphi - \psi}_{\mathcal{C}([-h,0],\mathbb{R})} \\
                                                                &\leq \abs{t-s}^{\frac{1}{2}}\left(\int_s^t \abs{\varphi'(\sigma)}^{2}\dx[\sigma]\right)^{\frac{1}{2}} + (\abs{h}^{\frac{1}{2}}+\abs{h}^{-\frac{1}{2}}) \norm{\varphi - \psi}_{H^{1}(-h,0;\mathbb{R})} \\
                                                                &\leq \abs{t-s}^{\frac{1}{2}}\alpha\abs{t-s}^{\frac{1}{2}} + (h^{\frac{1}{2}}+h^{-\frac{1}{2}}) \norm{\varphi - \psi}_{H^{1}(-h,0;\mathbb{R})}\\
                                                                &\leq \operatorname{max}\{\alpha, h^{\frac{1}{2}}+h^{-\frac{1}{2}}\}(\abs{t-s}+\norm{\varphi - \psi}_{H^{1}(-h,0;\mathbb{R})})
  \end{align*}
\end{proof}
\begin{lemma}
  \label{ThetaLip}
  For a given function $u\in H^{1}(-h,T;\mathbb{R}^{n})$ the function
  \begin{align*}
    \Theta \colon [0,T]&\rightarrow H^{1}(-h,0;\mathbb{R}^{n})\\
    t&\mapsto u_{t}
  \end{align*}
  is Lipschitz continuous.
\end{lemma}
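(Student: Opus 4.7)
My plan is to split the $H^{1}$-norm into its $L^{2}$-piece and its $L^{2}$-derivative-piece and to bound each by a multiple of $|t-\sigma|$. Since the weak derivative of a translate equals the translate of the weak derivative, $(u_{t})'(s) = u'(t+s)$, we have
\begin{align*}
  \|u_{t} - u_{\sigma}\|_{H^{1}(-h,0;\mathbb{R}^{n})}^{2} = \int_{-h}^{0} |u(t+s) - u(\sigma+s)|^{2}\,ds + \int_{-h}^{0} |u'(t+s) - u'(\sigma+s)|^{2}\,ds,
\end{align*}
so both summands take the form $\int_{-h}^{0} |f(t+s) - f(\sigma+s)|^{2}\,ds$ with $f=u$ and $f=u'$, respectively.

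For each such integral I would appeal to \Cref{FTC} to write $f(t+s) - f(\sigma+s) = \int_{\sigma+s}^{t+s} f'(\rho)\,d\rho$, apply Cauchy--Schwarz pointwise in $s$ to obtain $|f(t+s) - f(\sigma+s)|^{2} \leq |t-\sigma| \int_{\sigma+s}^{t+s} |f'(\rho)|^{2}\,d\rho$, and then invoke Fubini to swap the order of integration. Since for each fixed $\rho$ the set $\{s \in [-h,0] : \sigma + s \leq \rho \leq t + s\}$ is an interval of length at most $|t - \sigma|$, the resulting double integral is bounded by $|t-\sigma|^{2} \|f'\|_{L^{2}(-h,T)}^{2}$.

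Summing the two estimates (obtained with $f=u$ and $f=u'$) and taking square roots would yield
\begin{equation*}
  \|u_{t} - u_{\sigma}\|_{H^{1}(-h,0;\mathbb{R}^{n})} \leq |t-\sigma|\sqrt{\|u'\|_{L^{2}(-h,T)}^{2} + \|u''\|_{L^{2}(-h,T)}^{2}},
\end{equation*}
giving the claimed Lipschitz bound. The step I expect to require the most care is the application of \Cref{FTC} to $f = u'$: strictly speaking this tacitly uses that $u'$ is itself in $H^{1}(-h,T;\mathbb{R}^{n})$ rather than merely $L^{2}$. In the applications of this lemma within the paper the needed extra regularity is supplied by the DDE itself, since $x'(t)=F(x_{t})$ inherits Lipschitz behaviour from $F$, so that the scheme above closes for those $u$'s that actually arise as solutions.
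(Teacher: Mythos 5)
Your estimate for the $L^{2}$-part of the norm is exactly the paper's argument: write $u(t+s)-u(\sigma+s)$ as an integral of $u'$ via \Cref{FTC}, apply Cauchy--Schwarz, and swap the order of integration to pick up a second factor of $\abs{t-\sigma}$. On that part you and the paper coincide.

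The difference lies in the derivative piece, and here your own caveat exposes a real issue. The paper's proof opens with the identity $\norm{\Theta(t)-\Theta(\sigma)}_{H^{1}(-h,0;\mathbb{R}^{n})}^{2}=\int_{-h}^{0}\abs{u_{t}(s)-u_{\sigma}(s)}^{2}\dx[s]$, i.e.\ it silently drops the term $\int_{-h}^{0}\abs{u_{t}'(s)-u_{\sigma}'(s)}^{2}\dx[s]$ from the $H^{1}$-norm; what it actually proves is that $t\mapsto u_{t}$ is Lipschitz as a map into $L^{2}(-h,0;\mathbb{R}^{n})$. You are right that running the same argument on $f=u'$ would require $u'\in H^{1}$, i.e.\ $u\in H^{2}$, and this is not a mere technicality: for a general $u\in H^{1}(-h,T;\mathbb{R}^{n})$ the translation map $t\mapsto u'(t+\argdot)$ is continuous but not Lipschitz into $L^{2}$ (take $u'$ to be an indicator function, so that $u$ is a piecewise linear ramp in $H^{1}$; then $\norm{u'(t+\argdot)-u'(\sigma+\argdot)}_{L^{2}}$ behaves like $\abs{t-\sigma}^{1/2}$). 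So the statement with the full $H^{1}$-norm on the target is false under the stated hypothesis, and your proposed repair via the DDE does not close the gap either: the prehistory $\Phi$ is only assumed to lie in $H^{1}$ with essentially bounded derivative, so even a solution $u$ of \Cref{IVP} need not be $H^{2}$ on $[-h,T_{1}]$. What is actually available under the hypothesis $u\in H^{1}$ --- and what the paper's computation delivers --- is Lipschitz continuity of $\Theta$ with respect to the $L^{2}$-norm on the codomain; the full claim requires the stronger assumption $u\in H^{2}$.
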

\begin{proof}
  Let $0\leq t < s\leq T$. Appealing to \Cref{FTC} we can estimate:
  \begin{align*}
    \norm{\Theta(t)-\Theta(s)}_{H^{1}(-h,0;\mathbb{R}^{n})}^{2} &= \int_{-h}^0 \abs{u_{t}(\sigma)-u_{s}(\sigma)}^{2}\dx[\sigma]\\
                                                                &= \int_{-h}^0 \abs{u(t+\sigma)-u(s+\sigma)}^{2}\dx[\sigma]\\
                                                                &= \int_{-h}^0 \abs[\bigg]{\int_{s+\sigma}^{t+\sigma}u'(\rho) \dx[\rho]}^{2}\dx[\sigma]\\
                                                                &\leq \int_{-h}^0 \abs{s-t}\int_{s+\sigma}^{t+\sigma}\abs{u'(\rho)}^{2} \dx[\rho] \dx[\sigma]\\
                                                                &=\int_{s-h}^{t} \abs{s-t} \abs{u'(\rho)}^{2}\int_{\rho-t}^{\rho-s} \dx[\sigma] \dx[\rho]\\
                                                                &=\abs{s-t}^{2} \int_{s-h}^{t} \abs{u'(\rho)}^{2} \dx[\rho]\\
                                                                &\leq \abs{s-t}^{2}\norm{u}_{H^{1}(-h,T;\mathbb{R}^{n})}^{2}
  \end{align*}
\end{proof}

For the proof of the main result we need to estimate the individual functions that make up $F$ from our IVP \Cref{IVP}. For convenience's sake we prove the necessary regularity properties in separate lemmata.
\begin{lemma}
  \label{tauLip}
  The function $\tau\colon \, H^{1}(-h,0;\mathbb{R})\rightarrow [0,h]$ is Lipschitz continuous.
\end{lemma}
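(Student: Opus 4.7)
The plan is to exploit the implicit definition $y_\varphi(\tau(\varphi)) = x_1$ together with the two facts already on hand from \Cref{yexpbdd}: (a) a Lipschitz dependence of $y_\varphi$ on $\varphi$ in the supremum norm of the prehistory, and (b) the strict monotonicity of $y_\varphi$ coming from the uniform lower bound $g \geq \epsilon$ on the relevant set. Concretely, fix $\varphi, \psi \in H^1(-h,0;\mathbb{R})$ and, without loss of generality, assume $\tau(\varphi) \leq \tau(\psi)$. Then I decompose
\begin{equation*}
  0 = y_\psi(\tau(\psi)) - y_\varphi(\tau(\varphi))
    = \bigl[y_\psi(\tau(\psi)) - y_\psi(\tau(\varphi))\bigr] + \bigl[y_\psi(\tau(\varphi)) - y_\varphi(\tau(\varphi))\bigr].
\end{equation*}

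First, I estimate the second bracket using the stability estimate from \Cref{yexpbdd}, which yields
\begin{equation*}
  \abs{y_\psi(\tau(\varphi)) - y_\varphi(\tau(\varphi))} \leq L\,h\,\e^{Lh}\,\norm{\varphi-\psi}_{\mathcal{C}([-h,0],\mathbb{R})},
\end{equation*}
where $L$ is the Lipschitz constant of $g$ (applied on the relevant range of values of $y$, which stays in $\overline{\mathcal{B}(x_2,b)}$ by \Cref{ywelldefined}). Second, from the ODE \Cref{ODEy} and the lower bound $g \geq \epsilon$ on $\overline{\mathcal{B}(x_2,b)} \times I$, I have $y_\psi'(s) \leq -\epsilon$ throughout $[0,\tau(\psi)]$, hence by the fundamental theorem of calculus
\begin{equation*}
  \abs{y_\psi(\tau(\psi)) - y_\psi(\tau(\varphi))} \geq \epsilon\,\abs{\tau(\psi)-\tau(\varphi)}.
\end{equation*}
Combining these two displays gives $\abs{\tau(\varphi)-\tau(\psi)} \leq \tfrac{L h \e^{Lh}}{\epsilon}\,\norm{\varphi-\psi}_{\mathcal{C}([-h,0],\mathbb{R})}$.

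To finish, I pass from the $\mathcal{C}$-norm to the $H^1$-norm via the Sobolev embedding \Cref{Sobolev}, obtaining a constant of the shape $\tfrac{L h \e^{Lh}}{\epsilon}(h^{1/2}+h^{-1/2})$ as the Lipschitz constant of $\tau$ on $H^1(-h,0;\mathbb{R})$. The only subtle point, and the place where care is needed, is verifying that the estimate in \Cref{yexpbdd} can indeed be applied with a Lipschitz constant $L$ that does not depend on $\varphi,\psi$: this is clean here because by \Cref{ywelldefined} the solutions $y_\varphi, y_\psi$ stay in the set $\overline{\mathcal{B}(x_2,b)}$ on $[0,h]$, where $g$ is Lipschitz by \Cref{Assumptions}, so the constant $L$ can be chosen uniformly in $\varphi,\psi$. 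Everything else is a direct triangle-inequality and monotonicity argument, so no further obstacle is expected.
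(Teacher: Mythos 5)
Your proof is correct and rests on the same two ingredients as the paper's: the uniform lower bound $g\ge\epsilon$ (giving strict monotonicity of $y_\psi$, hence $\epsilon\abs{\tau(\varphi)-\tau(\psi)}\le\abs{y_\psi(\tau(\psi))-y_\psi(\tau(\varphi))}$) and the stability estimate for $y$ from \Cref{yexpbdd}, followed by the Sobolev embedding \Cref{Sobolev}. The only difference is cosmetic: the paper integrates \Cref{ODEy} and compares the two integrals, whereas you split the difference of endpoint values directly, which gives a marginally cleaner constant but is the same argument in substance.
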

\begin{proof}
  The proof is essentially the same as the proof of \cite[thm~6.2]{Waurick2023}. For the convenience of the reader we provide the details. Let $\varphi,\psi\in H^{1}(-h,0;\mathbb{R})$ and let $y_{\varphi},y_{\psi}$ denote the according to \Cref{ywelldefined} unique solutions determining the values $\tau(\varphi)$ and $\tau(\psi)$ respectively. Note that in that context the Sobolev embedding \Cref{Sobolev} is applicable. By integrating \Cref{ODEy} we obtain
  \begin{equation*}
    x_{2}-\int_{0}^{\tau(\varphi)} g(y_{\varphi}(t),\varphi(-t)) \dx[t] = x_{1} = x_{2}-\int_{0}^{\tau(\psi)} g(y_{\psi}(t),\psi(-t)) \dx[t]
  \end{equation*}
  Without loss of generality we assume $\tau(\psi)<\tau(\varphi)$ and using the \Cref{Assumptions} for $g$ we estimate
  \begin{align*}
    \epsilon\abs{\tau(\varphi)-\tau(\psi)} &\leq \int_{\tau(\psi)}^{\tau(\varphi)} g(y_{\psi}(t),\psi(-t)) \dx[t] \\
                                             &= \int_{0}^{\tau(\varphi)} g(y_{\psi}(t),\psi(-t)) \dx[t] - \int_{0}^{\tau(\psi)} g(y_{\psi}(t),\psi(-t)) \dx[t] \\
                                           &= \int_{0}^{\tau(\varphi)} g(y_{\psi}(t),\psi(-t)) \dx[t] - \int_{0}^{\tau(\varphi)} g(y_{\varphi}(t),\varphi(-t)) \dx[t] \\
                                           &\leq \int_{0}^{h} \abs[\big]{g(y_{\varphi}(t),\varphi(-t))-g(y_{\psi}(t),\psi(-t))}\dx[t] \\
                                           &\leq \int_{0}^{h} L\left(\abs{y_{\varphi}(t)-y_{\psi}(t)}+\abs{\varphi(-t)-\psi(-t)}\right)\dx[t]
  \end{align*}
  where $L$ is the Lipschitz-constant of $g$. Then, substituting the estimate
  \begin{equation*}
    \abs{y_{\varphi}(t)-y_{\psi}(t)}\leq L\norm{\varphi-\psi}_{\mathcal{C}([-h,0],\mathbb{R})}t\e^{Lt}
  \end{equation*}
  from \Cref{yexpbdd} into the inequality we conclude
  \begin{align*}
    \epsilon\abs{\tau(\varphi)-\tau(\psi)} &\leq \int_{0}^{h} \left(L^{2}\norm{\varphi-\psi}_{\mathcal{C}([-h,0],\mathbb{R})}t\e^{Lt} + L\abs{\varphi(-t)-\psi(-t)}\right)\dx[t] \\
                                           &= L^{2}\norm{\varphi-\psi}_{\mathcal{C}([-h,0],\mathbb{R})} \int_0^h te^{Lt} \dx[t] + \int_0^h L\abs{\varphi(-t)-\psi(-t)}\dx[t] \\
                                           &\leq L^{2} \int_{0}^{h} t\e^{Lt}\dx[t] \norm{\varphi-\psi}_{\mathcal{C}([-h,0];\mathbb{R})} + L \sqrt{h} \norm{\varphi-\psi}_{L^{2}(-h,0;\mathbb{R})} \\
                                             &\leq \left(L^{2}(h^{\frac{1}{2}}+h^{-\frac{1}{2}})\int_0^h t\e^{Lt} \dx[t] + L\sqrt{h}\right) \norm{\varphi-\psi}_{H^{1}(-h,0;\mathbb{R})}
  \end{align*}
  appealing to the Sobolev-embedding \Cref{Sobolev} again. This yields the claim.
\end{proof}

An immediate corollary from \Cref{evLip} and \Cref{tauLip} is the following:
\begin{corollary}
  \label{evtauLip}
  For every $\alpha>0$ the function
  \begin{equation*}
    \begin{split}
      \operatorname{ev}\circ (\operatorname{id}\times(-\tau))\colon \, H^{1}(-h,0;\mathbb{R})\cap V_{\alpha}&\rightarrow \mathbb{R}\\
      \varphi&\mapsto\varphi(-\tau(\varphi))
    \end{split}
  \end{equation*}
  is Lipschitz-continuous.
\end{corollary}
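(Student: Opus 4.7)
The plan is to view the map $\varphi \mapsto \varphi(-\tau(\varphi))$ as the composition
\[
  \varphi \;\longmapsto\; (\varphi, -\tau(\varphi)) \;\longmapsto\; \operatorname{ev}(\varphi, -\tau(\varphi)),
\]
and to observe that Lipschitz continuity is preserved under such compositions. Concretely, I would fix $\alpha>0$, let $L_{\operatorname{ev}}$ denote the Lipschitz constant from \Cref{evLip} (which depends on $\alpha$) and $L_{\tau}$ the Lipschitz constant from \Cref{tauLip}, and estimate for $\varphi,\psi \in H^{1}(-h,0;\mathbb{R})\cap V_{\alpha}$
\[
  \abs{\varphi(-\tau(\varphi))-\psi(-\tau(\psi))}
  = \abs{\operatorname{ev}(\varphi,-\tau(\varphi))-\operatorname{ev}(\psi,-\tau(\psi))}.
\]

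Next I would apply \Cref{evLip} to the pairs $(\varphi,-\tau(\varphi))$ and $(\psi,-\tau(\psi))$, both of which lie in $(H^{1}(-h,0;\mathbb{R})\cap V_{\alpha})\times[-h,0]$ since $\tau$ takes values in $[0,h]$. This bounds the right-hand side by
\[
  L_{\operatorname{ev}}\bigl(\abs{\tau(\varphi)-\tau(\psi)}+\norm{\varphi-\psi}_{H^{1}(-h,0;\mathbb{R})}\bigr),
\]
after which an application of \Cref{tauLip} yields the final bound
\[
  L_{\operatorname{ev}}(L_{\tau}+1)\,\norm{\varphi-\psi}_{H^{1}(-h,0;\mathbb{R})},
\]
giving the claim.

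There is essentially no obstacle here; the only point to mention is that the composition respects the constraint set $V_{\alpha}$, because we are pairing the input $\varphi\in V_{\alpha}$ with a real number $-\tau(\varphi)\in[-h,0]$, so \Cref{evLip} is indeed applicable without any further care. The Lipschitz constant obtained naturally depends on $\alpha$ through $L_{\operatorname{ev}}$, which is consistent with the statement of the corollary.
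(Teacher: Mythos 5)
Your proposal is correct and is exactly the argument the paper intends: the corollary is stated without proof as an ``immediate'' consequence of \Cref{evLip} and \Cref{tauLip}, obtained precisely by applying the evaluation estimate to the pairs $(\varphi,-\tau(\varphi))$ and $(\psi,-\tau(\psi))$ and then bounding $\abs{\tau(\varphi)-\tau(\psi)}$ by the Lipschitz constant of $\tau$. Your remark that $-\tau(\varphi)\in[-h,0]$ keeps the composition inside the domain of $\operatorname{ev}$ is the only point needing verification, and it holds.
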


We will also deal with the integral term from $F$ separately.
\begin{lemma}
  \label{GLip}
  Let $\delta>0$ and $\phi \in H^{1}(-h,0;\mathbb{R})$. Then the function $\mathcal{G}$ from \Cref{DefG} as a function
  \begin{equation*}
    \mathcal{G}\colon \, \left( H^{1}(-h,0;\mathbb{R})\cap \mathcal{B}_{\mathcal{C}(-h,0;\mathbb{R})}(\phi,\delta) \right) \rightarrow \mathbb{R}
  \end{equation*}
  is Lipschitz-continuous.
\end{lemma}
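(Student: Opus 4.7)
The plan is to decompose $\mathcal{G}$ as a product of a simple prefactor and an exponential,
\[
  \mathcal{G}(\psi) = g(x_{2},\psi(0))\cdot e^{E(\psi)},\qquad E(\psi):=\int_{0}^{\tau(\psi)}(d-D_{1}g)(y(t,\psi),\psi(-t))\dx[t],
\]
and show that both factors are bounded and Lipschitz on the restricted domain. Since the product of bounded Lipschitz maps is Lipschitz and $\exp$ is Lipschitz on any bounded subset of $\mathbb{R}$, this will give the claim.

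For the prefactor, the Lipschitz continuity of $g$ from \Cref{Assumptions} combined with the Sobolev embedding \Cref{Sobolev} immediately yields
\[
  |g(x_{2},\psi(0))-g(x_{2},\tilde\psi(0))|\leq \|g\|_{\operatorname{Lip}}\,(h^{1/2}+h^{-1/2})\|\psi-\tilde\psi\|_{H^{1}},
\]
and boundedness is clear from the uniform bound on $g$ over $\overline{\mathcal{B}(x_{2},b)}\times I$.

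The main work is the Lipschitz property of $E$. For $\psi,\tilde\psi$ in the restricted domain, I would assume without loss of generality $\tau(\tilde\psi)\leq\tau(\psi)$ and split
\[
  E(\psi)-E(\tilde\psi)=\int_{\tau(\tilde\psi)}^{\tau(\psi)}(d-D_{1}g)(y(t,\psi),\psi(-t))\dx[t]+\int_{0}^{\tau(\tilde\psi)}\Delta(t)\dx[t],
\]
with $\Delta(t)$ denoting the integrand difference at $\psi$ and $\tilde\psi$. The key observation is that the integrands are evaluated on the compact set $K:=[x_{1},x_{2}]\times B$, where $B$ is the closed sup-norm $\delta$-neighbourhood of $\phi([-h,0])\subset I$: indeed $\psi(-t),\tilde\psi(-t)\in B$, and $y(t,\psi)\in[x_{1},x_{2}]$ on $[0,\tau(\psi)]$ by the monotonicity of $y$ noted in \Cref{ywelldefined}. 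Hence $d-D_{1}g$ is bounded on $K$, and the first integral is controlled by $\|d-D_{1}g\|_{\infty,K}\cdot|\tau(\psi)-\tau(\tilde\psi)|$, which is dominated by $\|\psi-\tilde\psi\|_{H^{1}}$ via \Cref{tauLip}. For $\Delta(t)$ I would apply the triangle inequality, controlling the change in the first argument through the sup-norm Lipschitz dependence of $y(t,\cdot)$ on the history from \Cref{yexpbdd} (together with the Sobolev embedding), and the change in the second argument through the local Lipschitz continuity of $d-D_{1}g$ in that variable postulated in \Cref{Assumptions}.

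The main technical subtlety concerns the first-argument dependence: while $D_{1}g$ is globally Lipschitz, $d$ is only assumed continuous in its first component, so controlling its contribution to $\Delta(t)$ relies on the implicit local Lipschitz property of $d$ on the compact set $K$. Once $E$ is shown to be Lipschitz and uniformly bounded on the restricted domain (integrand bounded on $K$ and $\tau\le h$), the composition with $\exp$ on the bounded range of $E$ is Lipschitz, and multiplication with the bounded Lipschitz prefactor closes the argument.
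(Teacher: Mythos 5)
Your decomposition and estimates follow essentially the same route as the paper's proof: isolate the integral term in the exponent, split the difference into a ``tail'' integral over $[\tau(\tilde\psi),\tau(\psi)]$ (controlled by boundedness of the integrand on a compact set together with \Cref{tauLip}) and an integrand-difference term (controlled via \Cref{yexpbdd} and the Lipschitz properties of $d-D_{1}g$), then compose with $\exp$ on the bounded range and multiply by the bounded Lipschitz prefactor $g(x_{2},\cdot(0))$. Your choice of compact set $[x_{1},x_{2}]\times B$ is in fact slightly cleaner than the paper's $\overline{y([0,h]\times\mathcal{B}(\phi,\delta))}\times[\phi(0)-M,\phi(0)+M]$, since by \Cref{ywelldefined} the trajectory $y(\cdot,\psi)$ stays in $[x_{1},x_{2}]$ up to time $\tau(\psi)$.

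The one point that is not a proof as written is exactly the step you label a ``subtlety'': there is no ``implicit local Lipschitz property of $d$'' in its first argument. \Cref{Assumptions} only give continuity of $d$ jointly and local Lipschitz continuity with respect to the \emph{second} component, and continuity on a compact set yields uniform continuity and boundedness, not a Lipschitz constant. The same remark applies to $D_{1}g$, which under \Cref{Assumptions} is bounded (by the global Lipschitz constant of $g$) and locally Lipschitz in the second variable, but not assumed Lipschitz in the first. So the term in $\Delta(t)$ coming from the change $y(t,\psi)\mapsto y(t,\tilde\psi)$ in the first argument cannot be closed from the stated hypotheses. To be fair, the paper's own proof makes the identical leap: it asserts that $d-D_{1}g$ is Lipschitz on the compact set $N$ ``because both $d$ and $D_{1}g$ are assumed to be locally Lipschitz-continuous'', which reads more into \Cref{Assumptions} than is literally there (compare \Cref{AssumptionsGetto}, where $d$ is assumed jointly locally Lipschitz and the argument does go through). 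So you have correctly located the weakest link of the argument rather than introduced a new gap; but if you want your write-up to be self-contained, you should either strengthen the hypothesis on $d$ (and on $D_{1}g$) to joint local Lipschitz continuity, or supply an argument for first-argument Lipschitz dependence that the assumptions as stated do not provide.
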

\begin{proof}
  We remind ourselves of the form of this function:
  \begin{equation*}
    \mathcal{G}(\varphi) = g(x_{2},\varphi(0))\e^{\int_0^{\tau(\varphi)}(d-D_{1}g)\left(y(s,\varphi),\varphi(-s)\right)\dx[s]}
  \end{equation*}
  We will refer to the integral term in the exponent as $G(\varphi)$. We will estimate $G$ first. Now let $\varphi, \psi \in H^{1}(-h,0;\mathbb{R})\cap \mathcal{B}_{\mathcal{C}(-h,0;\mathbb{R})}(\phi,\delta)$. Without loss of generality we assume $\tau(\varphi)\leq \tau(\psi)$:
  \begin{align*}
    \MoveEqLeft
      \abs{G(\varphi)-G(\psi)} \\
      &\leq \abs[\bigg]{\int_0^{\tau(\varphi)}(d-D_{1}g)\left(y(s,\varphi),\varphi(-s)\right)\dx[s] - \int_0^{\tau(\psi)}(d-D_{1}g)\left(y(s,\psi),\psi(-s)\right)\dx[s]} \\
      &\leq \abs[\bigg]{\int_0^{\tau(\varphi)}(d-D_{1}g)\left(y(s,\varphi),\varphi(-s)\right) - (d-D_{1}g)\left(y(s,\psi),\psi(-s)\right)\dx[s]} \\
      &\quad  + \abs[\bigg]{\int_{\tau(\varphi)}^{\tau(\psi)}(d-D_{1}g)\left(y(s,\psi),\psi(-s)\right)\dx[s]} \\
      &\leq \int_0^{\tau(\varphi)}\abs[\big]{(d-D_{1}g)\left(y(s,\varphi),\varphi(-s)\right) - (d-D_{1}g)\left(y(s,\psi),\psi(-s)\right)}\dx[s] \\
      &\quad + \int_{\tau(\varphi)}^{\tau(\psi)}\abs[\big]{(d-D_{1}g)\left(y(s,\psi),\psi(-s)\right)}\dx[s]
  \end{align*}
  We claim that the function $d-D_{1}g$ is Lipschitz-continuous on
  \begin{equation*}
    N\coloneqq \overline{y([0,h]\times \mathcal{B}_{\mathcal{C}([-h,0],\mathbb{R})}(\phi,\delta))} \times [\phi(0)-M,\phi(0)+M],
  \end{equation*}
  where $M\coloneq \operatorname{sup}\{\norm{\zeta}_{\infty}\colon \, \zeta \in \mathcal{B}_{\mathcal{C}([-h,0],\mathbb{R})}(\phi,\delta)\}$. This holds because $y$ is continuous and exponentially bounded in this setting thanks to the growth estimate in \Cref{yexpbdd}. The conclusion follows from the fact that both $d$ and $D_{1}g$ are assumed to be locally Lipschitz-continuous and $N$ is bounded and closed, hence compact. We call the associated Lipschitz-constant $L_{k}$. The supremum of $d-D_{1}g$ on $N$ we denote as $M_{k}$. The Lipschitz-constant of $y$ with respect to the second component we denote as $L_{y}$ (cf. \Cref{yexpbdd}), the Lipschitz-constant of $\tau$ as $L_{\tau}$ (cf. \Cref{tauLip}). We continue estimating:
  \begin{equation*}
    \begin{split}
      &\abs{G(\varphi)-G(\psi)} \\
      &\leq \int_0^{\tau(\varphi)}\abs[\big]{(d-D_{1}g)(y(s,\varphi),\varphi(-s)) - (d-D_{1}g)(y(s,\psi),\psi(-s))}\dx[s] \\
      &\quad+ \int_{\tau(\varphi)}^{\tau(\psi)}\abs[\big]{(d-D_{1}g)(y(s,\psi),\psi(-s))}\dx[s] \\
      &\leq \int_0^{\tau(\varphi)}L_{k}\abs[\big]{(y(s,\varphi),\varphi(-s)) - (y(s,\psi),\psi(-s))}\dx[s] + M_{k} \abs{\tau(\varphi)-\tau(\psi)} \\
      &= \int_0^{\tau(\varphi)}L_{k}\sqrt{\abs{y(s,\varphi)-y(s,\psi)}^{2}+\abs{\varphi(-s)-\psi(-s)}^{2}}\dx[s] +M_{k} \abs{\tau(\varphi)-\tau(\psi)} \\
      &\leq \int_0^{\tau(\varphi)}L_{k}\sqrt{L_{y}^{2}\norm{\varphi-\psi}_{H^{1}(-h,0;\mathbb{R})}^{2}+\norm{\varphi-\psi}_{\mathcal{C}([-h,0],\mathbb{R})}^{2}}\dx[s] +M_{k}L_{\tau} \norm{\varphi-\psi}_{H^{1}(-h,0;\mathbb{R})} \\
      &\leq hL_{k}\sqrt{L_{y}^{2}+(\abs{h}^{\frac{1}{2}}+\abs{h}^{-\frac{1}{2}})^{2}}\norm{\varphi-\psi}_{H^{1}(-h,0;\mathbb{R})}+M_{k}L_{\tau} \norm{\varphi-\psi}_{H^{1}(-h,0;\mathbb{R})} \\
    \end{split}
  \end{equation*}
  where in the last step we made use of the Sobolev-embedding \Cref{Sobolev}. We can also bound $G$:
  \begin{equation*}
    \abs{G(\varphi)}=\abs[\bigg]{\int_0^{\tau(\varphi)}(d-D_{1}g)\left(y(s,\varphi),\varphi(-s)\right)\dx[s]} \leq \int_0^{\tau(\varphi)} M_{k} \leq hM_{k}
  \end{equation*}
  Now we can start to estimate $\mathcal{G}$:
  \begin{equation*}
    \begin{split}
      \abs{\mathcal{G}(\varphi)-\mathcal{G}(\psi)} &= \abs{g(x_{2},\varphi(0))\e^{G(\varphi)} - g(x_{2},\psi(0))\e^{G(\psi)}}\\
      &\leq \abs{g(x_{2},\varphi(0))}\abs{\e^{G(\varphi)} - \e^{G(\psi)}} + \abs{g(x_{2},\varphi(0)) - g(x_{2},\psi(0))}\abs{e^{G(\psi)}} \\
    \end{split}
  \end{equation*}
  Since $g$ is a continuous function and bounded on bounded sets we can estimate $\abs{g(x_{2},\xi(0))}$ with $M_{g}$. Since the exponential function is (locally) Lipschitz-continuous and $G$ is as well as shown above the composition is too with a Lipschitz-constant we denote by $\tilde{L}_{G}$. Hence the first term can be estimated as:
  \begin{equation*}
    \abs{g(x_{2},\varphi(0))}\abs{\e^{G(\varphi)} - \e^{G(\psi)}} \leq M_{g} \tilde{L}_{G}\norm{\varphi-\psi}_{H^{1}(-h,0;\mathbb{R})}
  \end{equation*}
  The second term can be dealt with as follows
  \begin{equation*}
    \begin{split}
      \abs{g(x_{2},\varphi(0)) - g(x_{2},\psi(0))}\abs{\e^{G(\psi)}} &\leq L_{g}\abs{\varphi(0) - \psi(0)}\e^{hM_{k}} \\
      &\leq L_{g}\norm{\varphi-\psi}_{\mathcal{C}([-h,0],\mathbb{R})} \e^{hM_{k}} \\
      &\leq L_{g} \e^{hM_{k}}(\abs{h}^{\frac{1}{2}}+\abs{h}^{-\frac{1}{2}}) \norm{\varphi-\psi}_{H^{1}(-h,0;\mathbb{R})}  \\
    \end{split}
  \end{equation*}
  where we used the Sobolev-embedding \Cref{Sobolev} again.
\end{proof}

We compartmentalize the main steps of our main theorem into two lemmata.
\begin{lemma}
  Let $\alpha>0$, $\delta>0$ and $\Phi\in H^{1}(-h,0;\mathbb{R}^{2})$. Then the first component $F_{1}$ of $F$ from \Cref{IVP} is Lipschitz-continuous on $\mathcal{B}_{\mathcal{C}([-h,0];\mathbb{R}^{2})}(\Phi,\delta)$.
\end{lemma}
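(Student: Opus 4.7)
The plan is to apply a standard product-splitting estimate to $F_{1}(\varphi,\psi)=q(\psi(0))\varphi(0)$. Concretely, for $(\varphi_{1},\psi_{1}),(\varphi_{2},\psi_{2})\in\mathcal{B}_{\mathcal{C}([-h,0];\mathbb{R}^{2})}(\Phi,\delta)\cap H^{1}(-h,0;\mathbb{R}^{2})$ I would add and subtract $q(\psi_{1}(0))\varphi_{2}(0)$ to get
\[
\abs{F_{1}(\varphi_{1},\psi_{1})-F_{1}(\varphi_{2},\psi_{2})} \leq \abs{q(\psi_{1}(0))}\abs{\varphi_{1}(0)-\varphi_{2}(0)} + \abs{\varphi_{2}(0)}\abs{q(\psi_{1}(0))-q(\psi_{2}(0))}.
\]

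Next, I use that the sup-norm ball of radius $\delta$ around $\Phi=(\Phi_{1},\Phi_{2})$ forces $\psi_{i}(0) \in [\Phi_{2}(0)-\delta,\Phi_{2}(0)+\delta]$ and $\abs{\varphi_{i}(0)}\leq\norm{\Phi_{1}}_{\mathcal{C}([-h,0])}+\delta$. Since $q$ is locally Lipschitz-continuous by \Cref{Assumptions}, on this compact interval it is bounded by some $M_{q}$ and Lipschitz with some constant $L_{q}$. Substituting these uniform bounds yields
\[
\abs{F_{1}(\varphi_{1},\psi_{1})-F_{1}(\varphi_{2},\psi_{2})} \leq M_{q}\abs{\varphi_{1}(0)-\varphi_{2}(0)} + (\norm{\Phi_{1}}_{\mathcal{C}([-h,0])}+\delta)L_{q}\abs{\psi_{1}(0)-\psi_{2}(0)}.
\]

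To conclude, I pass from pointwise evaluation to the $H^{1}$-norm via the Sobolev embedding \Cref{Sobolev}, which gives $\abs{\xi(0)}\leq\norm{\xi}_{\mathcal{C}([-h,0])}\leq(h^{1/2}+h^{-1/2})\norm{\xi}_{H^{1}(-h,0)}$ for any $\xi\in H^{1}(-h,0;\mathbb{R})$. Applying this to $\xi=\varphi_{1}-\varphi_{2}$ and $\xi=\psi_{1}-\psi_{2}$ delivers the claimed Lipschitz estimate with respect to the product $H^{1}$-norm on the intersection of the ball with $H^{1}$. There is essentially no obstacle to overcome; the computation is pure bookkeeping. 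In particular, the derivative-bounded class $V_{\alpha}$ plays no role because $F_{1}$ only involves point evaluation at $0$ rather than the delay $\tau$ or the exponential term $\mathcal{G}$, which explains why $\alpha$ does not actually appear in the estimate despite being present in the hypothesis.
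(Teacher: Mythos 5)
Your proposal is correct and follows essentially the same route as the paper: split the product difference into two terms, bound $q$ and the point evaluations uniformly on the sup-norm ball using local Lipschitz continuity of $q$ on a compact interval, and pass to the $H^{1}$-norm via the Sobolev embedding (\Cref{Sobolev}). Your closing observation that $V_{\alpha}$ is not needed here is also consistent with the paper, which states this lemma on the ball alone and only intersects with $V_{\alpha}$ for the second component $F_{2}$.
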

\begin{proof}
  For $(\zeta,\varphi),(\eta,\psi)\in \mathcal{B}_{\mathcal{C}([-h,0];\mathbb{R}^{2})}(\Phi,\delta)$ we can estimate
  \begin{equation*}
      \abs[\big]{q(\varphi(0))\zeta(0)-q(\psi(0))\eta(0)}
      \leq \abs[\big]{q(\varphi(0))-q(\psi(0))} \abs[\big]{\zeta(0)} + \abs[\big]{q(\psi(0))}\abs[\big]{\zeta(0)-\eta(0)}
  \end{equation*}
  For any $\xi,\omega \in \mathcal{B}_{\mathcal{C}([-h,0],\mathbb{R})}(\Phi_{1},\delta)$ we can estimate using $M\coloneqq \norm{\Phi}_{\mathcal{C}([-h,0],\mathbb{R}^{2})}+\delta$, $M_{q}\coloneqq \text{sup}_{\sigma \in [\Phi_{1}(0)-M,\Phi_{1}(0)+M]} \abs{q(\sigma)}$ and $L_{q}\coloneqq \norm{q|_{[\Phi_{1}(0)-M,\Phi_{1}(0)+M]}}_{\text{Lip}}$ that: $\abs{\xi(0)}\leq M$ and $\abs{q(\xi(0))-q(\omega(0))}\leq L\abs{\xi(0)-\omega(0)}$. Hence we can further estimate:
  \begin{equation*}
    \begin{split}
      &\abs[\big]{q(\varphi(0))-q(\psi(0))}\abs[\big]{\zeta(0)} + \abs[\big]{q(\psi(0))}\abs{\zeta(0)-\eta(0)}\\
      &\leq L_{q}M\abs{\varphi(0)-\psi(0)} + M_{q}\abs{\zeta(0)-\eta(0)}\\
      &\leq L_{q}M\norm{\varphi-\psi}_{\mathcal{C}([-h,0],\mathbb{R})} + M_{q}\norm{\zeta-\eta}_{\mathcal{C}([-h,0],\mathbb{R})}\\
      &\leq (L_{q}M+M_{q})(\abs{h}^{\frac{1}{2}}-\abs{h}^{-\frac{1}{2}})\left(\norm{\varphi-\psi}_{H^{1}(-h,0;\mathbb{R})} + \norm{\zeta-\eta}_{H^{1}(-h,0;\mathbb{R})}\right)\\
      &=C\norm{(\zeta,\varphi)-(\eta,\psi)}_{H^{1}(-h,0;\mathbb{R}^{2})}
    \end{split}
  \end{equation*}
  where we used the Sobolev-embedding \Cref{Sobolev}.
\end{proof}

Similarly but sligthly more involved we also obtain:
\begin{lemma}
  Let $\alpha>0$, $\delta>0$ and $\Phi\in H^{1}(-h,0;\mathbb{R}^{2})$. Then the second component $F_{2}$ of $F$ from \Cref{IVP} is Lipschitz-continuous on $V_{\alpha}\cap \mathcal{B}_{\mathcal{C}([-h,0],\mathbb{R}^{2})}(\Phi,\delta)$.
\end{lemma}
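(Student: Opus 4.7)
The plan is to set $F_{2}(\zeta,\varphi)=A(\varphi)\,B(\zeta,\varphi)\,C(\varphi)-\mu\varphi(0)$ with the shorthands
\begin{equation*}
  A(\varphi)\coloneqq\beta(\varphi(-\tau(\varphi))),\qquad B(\zeta,\varphi)\coloneqq\zeta(-\tau(\varphi)),\qquad C(\varphi)\coloneqq\mathcal{G}(\varphi),
\end{equation*}
and to compare $F_{2}(\zeta,\varphi)-F_{2}(\eta,\psi)$ via the standard three-term telescope
\begin{equation*}
  (A(\varphi)-A(\psi))B(\zeta,\varphi)C(\varphi)+A(\psi)(B(\zeta,\varphi)-B(\eta,\psi))C(\varphi)+A(\psi)B(\eta,\psi)(C(\varphi)-C(\psi)),
\end{equation*}
plus the trivial term $\mu(\psi(0)-\varphi(0))$, which is handled exactly as in the previous lemma via the Sobolev embedding \Cref{Sobolev}.

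First I would secure uniform bounds for the three factors on the ball. For any $\xi,\omega\in\mathcal{B}_{\mathcal{C}([-h,0],\mathbb{R}^{2})}(\Phi,\delta)$, all evaluations $\xi_{i}(s),\omega_{i}(s)$ lie in the compact interval $[-M,M]$ with $M\coloneqq\norm{\Phi}_{\mathcal{C}}+\delta$. Using local Lipschitz-continuity and continuity of $\beta$ on $[-M,M]$ we bound $|A(\cdot)|\leq M_{\beta}$; using \Cref{GLip} and its proof we bound $|C(\cdot)|\leq M_{g}\e^{hM_{k}}$; and of course $|B(\zeta,\varphi)|\leq M$. Denote the respective Lipschitz-constants of $\beta|_{[-M,M]}$, of $\mathcal{G}$ on the ball (via \Cref{GLip}), and of the map $\varphi\mapsto\varphi(-\tau(\varphi))$ on $V_{\alpha}$ (via \Cref{evtauLip}) by $L_{\beta},L_{\mathcal{G}},L_{\mathrm{ev}\tau}$.

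The first and third telescope terms are then immediate: the first is controlled by $M\cdot M_{g}\e^{hM_{k}}\cdot L_{\beta}L_{\mathrm{ev}\tau}\norm{\varphi-\psi}_{H^{1}}$ and the third by $M_{\beta}M\cdot L_{\mathcal{G}}\norm{\varphi-\psi}_{H^{1}}$. The middle term is the only real obstacle, because $B(\zeta,\varphi)-B(\eta,\psi)=\zeta(-\tau(\varphi))-\eta(-\tau(\psi))$ mixes the two arguments. I split it as
\begin{equation*}
  \zeta(-\tau(\varphi))-\eta(-\tau(\psi))=\bigl(\zeta(-\tau(\varphi))-\zeta(-\tau(\psi))\bigr)+\bigl(\zeta(-\tau(\psi))-\eta(-\tau(\psi))\bigr).
\end{equation*}
The second summand is bounded by $\norm{\zeta-\eta}_{\mathcal{C}}$ and hence by the $H^{1}$-norm via \Cref{Sobolev}. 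For the first summand, the restriction to $V_{\alpha}$ is essential: since $\zeta\in V_{\alpha}$, \Cref{evLip} supplies a Lipschitz-constant $L_{\mathrm{ev}}$ so that $|\zeta(-\tau(\varphi))-\zeta(-\tau(\psi))|\leq L_{\mathrm{ev}}|\tau(\varphi)-\tau(\psi)|\leq L_{\mathrm{ev}}L_{\tau}\norm{\varphi-\psi}_{H^{1}}$ by \Cref{tauLip}. Multiplying by the bounds $M_{\beta}$ and $M_{g}\e^{hM_{k}}$ for $A(\psi)$ and $C(\varphi)$ yields an $H^{1}$-Lipschitz estimate for the middle term.

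Collecting all contributions (and the $\mu$-term) one obtains a single constant $C=C(\alpha,\delta,\Phi)$ such that
\begin{equation*}
  |F_{2}(\zeta,\varphi)-F_{2}(\eta,\psi)|\leq C\,\norm{(\zeta,\varphi)-(\eta,\psi)}_{H^{1}(-h,0;\mathbb{R}^{2})}
\end{equation*}
on $V_{\alpha}\cap\mathcal{B}_{\mathcal{C}([-h,0],\mathbb{R}^{2})}(\Phi,\delta)$, which is the claim. The only genuine subtlety is that the factor $\zeta(-\tau(\varphi))$ forces me to invoke \Cref{evLip}, and hence the $V_{\alpha}$-hypothesis on the first component; every other step is an application of the product rule for Lipschitz functions, boundedness on the chosen ball, and the Sobolev embedding converting $\mathcal{C}$-bounds into $H^{1}$-bounds.
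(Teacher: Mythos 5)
Your proposal is correct and follows essentially the same route as the paper's proof: the same three-term telescoping of the product $\beta(\varphi(-\tau(\varphi)))\,\zeta(-\tau(\varphi))\,\mathcal{G}(\varphi)$ plus the $\mu$-term, the same uniform bounds on the ball, \Cref{GLip}, \Cref{tauLip} and \Cref{evtauLip} for the factors, and the same splitting of the mixed evaluation term $\zeta(-\tau(\varphi))-\eta(-\tau(\psi))$ with the $V_{\alpha}$-hypothesis doing exactly the work you identify (the paper applies the $\alpha$-bound to $\eta$ rather than $\zeta$, which is immaterial). No gaps.
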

\begin{proof}
  For $(\zeta,\varphi),(\eta,\psi)\in V_{\alpha}\cap \mathcal{B}_{\mathcal{C}([-h,0],\mathbb{R}^{2})}(\Phi,\delta)$ we can estimate
  \begin{equation*}
    \begin{split}
      &\abs[\big]{\beta(\varphi(-\tau (\varphi)))\zeta(-\tau(\varphi))\mathcal{G}(\varphi)-\mu\varphi(0) - \beta(\psi (-\tau (\psi)))\eta(-\tau(\psi))\mathcal{G}(\psi)+\mu\psi(0)}\\
      &\leq \abs[\big]{\beta(\varphi (-\tau (\varphi)))\zeta(-\tau(\varphi))\mathcal{G}(\varphi) - \beta(\varphi (-\tau (\varphi)))\zeta(-\tau(\varphi))\mathcal{G}(\psi)}\\
      &\quad+ \abs[\big]{\beta(\varphi (-\tau (\varphi)))\zeta(-\tau(\varphi))\mathcal{G}(\psi) - \beta(\psi (-\tau (\psi)))\eta(-\tau(\psi))\mathcal{G}(\psi)} + \mu\abs{\varphi(0)-\psi(0)}\\
      &\leq \abs[\big]{\beta(\varphi (-\tau (\varphi)))\zeta(-\tau(\varphi))\left[\mathcal{G}(\varphi)-\mathcal{G}(\psi)\right]} + \mu\abs{\varphi(0)-\psi(0)}\\
      &\quad+ \abs[\big]{\mathcal{G}(\psi)\beta(\varphi (-\tau (\varphi)))\left[\zeta(-\tau(\varphi)) - \eta(-\tau(\psi))\right]}\\
      &\quad+ \abs[\big]{\mathcal{G}(\psi)\eta(-\tau(\psi))\left[\beta(\varphi (-\tau (\varphi))) - \beta(\psi (-\tau (\psi)))\right]}
    \end{split}
  \end{equation*}
  We have to estimate four terms.
  \begin{enumerate}[leftmargin=3ex]
  \item First the easiest:
  \begin{equation*}
      \mu\abs{\varphi(0)-\psi(0)} \leq \mu \norm{\varphi-\psi}_{\mathcal{C}([-h,0],\mathbb{R})} \leq \mu (\abs{h}^{\frac{1}{2}}-\abs{h}^{-\frac{1}{2}}) \norm{\varphi-\psi}_{H^{1}(-h,0;\mathbb{R})}
  \end{equation*}

    \item For the second term notice that the image of $\mathcal{B}_{\mathcal{C}([-h,0],\mathbb{R})}(\phi,\delta)$ for $\phi\in\{\Phi_{1},\Phi_{2}\}$ under the map $\text{ev}\circ (\operatorname{id}\times(-\tau))$ is a bounded subset of $\mathbb{R}$: For $\zeta \in \mathcal{B}_{\mathcal{C}([-h,0],\mathbb{R})}(\phi,\delta)$ we can estimate
          \begin{equation*}
            \abs{\text{ev}\circ(\text{id}\times(-\tau))(\zeta)}=\abs{\text{ev}(\zeta,-\tau(\zeta))}=\abs{\zeta(-\tau(\zeta))}\leq \norm{\phi}_{\mathcal{C}([-h,0],\mathbb{R})}+\delta
          \end{equation*}
          Therefore the continuous function $\beta$ allows the supremum
          \begin{equation*}
            M_{\beta}\coloneqq \text{sup}\left\{\abs[\big]{\beta(\text{ev}\circ(\text{id}\times(-\tau)))(\xi)}\colon \, \xi\in \mathcal{B}_{\mathcal{C}([-h,0],\mathbb{R})}(\Phi_{2},\delta)\right\}<\infty.
          \end{equation*}
          With the term $M\coloneqq \norm{\Phi}_{\mathcal{C}([-h,0],\mathbb{R}^{2})}+\delta$ we can estimate:
  \begin{equation*}
    \begin{split}
      \abs[\big]{\beta(\varphi (-\tau (\varphi)))\zeta(-\tau(\varphi))\left[\mathcal{G}(\varphi)-\mathcal{G}(\psi)\right]}
      &=\abs[\big]{\beta(\varphi (-\tau (\varphi)))}\abs[\big]{\zeta(-\tau(\varphi))}\abs[\big]{\mathcal{G}(\varphi)-\mathcal{G}(\psi)}\\
      &\leq MM_{\beta} \abs{\mathcal{G}(\varphi)-\mathcal{G}(\psi)}\\
      &\leq MM_{\beta} L_{\mathcal{G}}\norm{\varphi-\psi}_{H^{1}(-h,0;\mathbb{R})}
    \end{split}
  \end{equation*}
  where we made use of the Lipschitz-continuity of $\mathcal{G}$ from \Cref{GLip}.
    \item We move on to the third term. We make use of the bound $M_{\beta}$ from the previous step again and further note that the function $\mathcal{G}$ admits a bound $M_{\mathcal{G}}$ on $\mathcal{B}_{\mathcal{C}([-h,0],\mathbb{R})}(\Phi_{2},\delta)$ since on that set it is Lipschitz-continuous by \Cref{GLip}. By Lipschitz-continuity of the delay $\tau$ from \Cref{tauLip} we also have a Lipschitz-constant $L_{\tau}$ of $\tau$. Then we can estimate:
    \begin{equation*}
    \begin{split}
      &\abs[\big]{\mathcal{G}(\psi)\beta(\varphi (-\tau (\varphi)))\left[\zeta(-\tau(\varphi)) - \eta(-\tau(\psi))\right]}\\
      &\leq M_{\mathcal{G}}M_{\beta}\abs[\big]{\zeta(-\tau(\varphi)) - \eta(-\tau(\psi))}\\
      &\leq M_{\mathcal{G}}M_{\beta}\abs[\big]{\zeta(-\tau(\varphi)) - \eta(-\tau(\varphi))} + M_{\mathcal{G}}M_{\beta} \abs[\big]{\eta(-\tau(\varphi)) - \eta(-\tau(\psi))} \\
      &\leq M_{\mathcal{G}}M_{\beta} \norm{\zeta - \eta}_{\mathcal{C}([-h,0],\mathbb{R})} + M_{\mathcal{G}}M_{\beta}\alpha \abs{\tau(\varphi)-\tau(\psi)}\\
      &\leq M_{\mathcal{G}}M_{\beta}(h^{\frac{1}{2}}+h^{-\frac{1}{2}}) \norm{\zeta - \eta}_{H^{1}(-h,0;\mathbb{R})} + M_{\mathcal{G}}M_{\beta} \alpha L_{\tau} \norm{\varphi-\psi}_{H^{1}(-h,0;\mathbb{R})}
    \end{split}
    \end{equation*}
  \item That only leaves one more term. As argued in step (2) the image of $\mathcal{B}_{\mathcal{C}([-h,0],\mathbb{R})}(\Phi_{2},\delta)$ under the map $\text{ev}\circ(\text{id}\times(-\tau))$ is a bounded subset of $\mathbb{R}$ and therefore the locally Lipschitz-continous $\beta$ admits a Lipschitz-constant $L_{\beta}$ on $\mathcal{B}_{\mathcal{C}([-h,0],\mathbb{R})}(\Phi_{2},\delta)$. We calculate
  \begin{equation*}
    \begin{split}
      &\abs[\big]{\mathcal{G}(\psi)\eta(-\tau(\psi))\left[\beta(\varphi (-\tau (\varphi))) - \beta(\psi (-\tau (\psi)))\right]} \\
      &\leq MM_{\mathcal{G}} \abs[\big]{\beta(\varphi (-\tau (\varphi))) - \beta(\psi (-\tau (\psi)))} \\
      &\leq MM_{\mathcal{G}} L_{\beta} \abs[\big]{\varphi(-\tau(\varphi))-\psi(-\tau(\psi))} \\
      &= MM_{\mathcal{G}} L_{\beta} \abs[\big]{\text{ev}\circ(\text{id}\times(-\tau))(\varphi -\psi)} \\
      &\leq MM_{\mathcal{G}} L_{\beta} \norm{\varphi -\psi}_{H^{1}(-h,0;\mathbb{R})}
    \end{split}
  \end{equation*}
  where we used \Cref{evtauLip} in the last step.
  \end{enumerate}
  That accounts for all terms.
\end{proof}

Now we move on to the proof of the main theorem:
\begin{proof}[Proof of \Cref{localExistence}]
  We need only apply \Cref{localFDE} in conjunction with \Cref{Restriction}; in other words we have to prove that $F$ is almost uniformly Lipschitz-continuous on a suitable $\norm{.}_{\infty}$-neighbourhood of $\Phi$. So for a given $\alpha>0$ and a given $\delta>0$ we restrict ourselves to $A\coloneqq \mathcal{B}_{\mathcal{C}([-h,0],\mathbb{R}^{2})}(\Phi,\delta)\cap V_{\alpha}$. Let $(\zeta,\varphi),(\eta,\psi)\in A$. Then we can estimate:
  \begin{equation*}
    \begin{split}
      &\norm{F(\zeta,\varphi) - F(\eta,\psi)}_{H^{1}(-h,0;\mathbb{R}^{2})}^{2} \\
      &= \norm{F_{1}(\zeta,\varphi) - F_{1}(\eta,\psi)}_{H^{1}(-h,0;\mathbb{R})}^{2} + \norm{F_{2}(\zeta,\varphi) - F_{2}(\eta,\psi)}_{H^{1}(-h,0;\mathbb{R})}^{2}\\
      &\leq L_{1}^{2}\left(\norm{\zeta-\eta}_{H^{1}(-h,0;\mathbb{R})}^{2}+\norm{\varphi-\psi}_{H^{1}(-h,0;\mathbb{R})}^{2}\right) \\
      &\quad+ L_{2}^{2}\left(\norm{\zeta-\eta}_{H^{1}(-h,0;\mathbb{R})}^{2}+\norm{\varphi-\psi}_{H^{1}(-h,0;\mathbb{R})}^{2}\right)\\
      &= (L_{1}^{2}+L_{2}^{2})\norm{(\zeta,\varphi)-(\eta,\psi)}_{H^{1}(-h,0;\mathbb{R}^{2})}^{2}
    \end{split}
  \end{equation*}
  where we denote the Lipschitz constants of the first and second components by $L_{1}$ and $L_{2}$ from the preceding two lemmata respectively. This proves the claim.
\end{proof}

With our main result we can also show global existence of solutions thanks to an exponential a priori bound on the solution. To prove this we make the following observation first:
\begin{remark}[Variations of constants formula]
  \label{VoC}
  For some $\Phi=(\varphi,\psi)\in H^{1}(-h,0;\mathbb{R}^{2})$ the solution $(w,v)$ of \Cref{IVP} by the variation of constants formula satisfies
  \begin{equation*}
    \begin{split}
      w(t)&=\varphi(0)\e^{\int_{0}^{t}q(v(s))\dx[s]} \\
      v(t)&=\e^{-\mu t}\left[\psi(0)+\int_{0}^{t}\e^{\mu s}\beta(v(s-\tau(v_{s})))\mathcal{G}(v_{s})w(s-\tau(v_{s}))\dx[s]\right]
    \end{split}
  \end{equation*}
  The formula is easily verified via calculation, which we omit.
\end{remark}
\begin{theorem}[Global existence of solutions]
  \label{globalExistence}
  Under the additional \Cref{Furhter assumptions} the uniquely defined local solution of \Cref{IVP} is global.\footnote{Meaning the maximal existence time is $T=\infty$.} If additionally $\beta$ is a bounded function, the global solution of \Cref{IVP} is exponentially bounded.
\end{theorem}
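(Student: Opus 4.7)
My plan is to combine the variation of constants formula from Remark \ref{VoC} with the Further Assumptions to produce an a priori bound on $|w|$ and $|v|$ on every finite subinterval of the maximal existence interval, and then invoke Theorem \ref{localExistence} as a continuation principle. The exponential bound under the stronger hypothesis follows by rerunning the same computation with $\beta$ bounded.

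First I would collect the uniform bounds permitted by the additional hypotheses. Boundedness of $q$ by $M_q$ immediately gives, via the first line of Remark \ref{VoC}, the clean estimate $|w(t)|\leq|\varphi(0)|\,\e^{M_{q}t}$. Boundedness of $d-D_{1}g$ by $M_{dg}$ together with the uniform bound $K$ of $g$ on $\overline{\mathcal{B}(x_2,b)}\times I$ from Assumptions \ref{Assumptions} yields a uniform bound on $\mathcal{G}$, namely $|\mathcal{G}(\psi)|\leq K\e^{hM_{dg}}=:M_{\mathcal{G}}$ for every $\psi\in\mathcal{M}$. Next, using the linear growth of $\beta$, say $|\beta(z)|\leq A+B|z|$, the second line of Remark \ref{VoC} yields for $V(t):=\sup_{s\in[-h,t]}|v(s)|$ an inequality of the form
\begin{equation*}
V(t)\leq \|\psi\|_{\infty}+ M_{\mathcal{G}}|\varphi(0)|\int_{0}^{t}\e^{M_{q}s}\bigl(A+B\,V(s)\bigr)\dx[s],
\end{equation*}
since $\e^{-\mu(t-s)}\leq 1$ and $s-\tau(v_s)\in[-h,s]$. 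Gr\"{o}nwall's lemma then produces an explicit bound on $V(t)$ that is finite on every finite interval. Hence $(w,v)$ cannot blow up in finite time.

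The second step is a continuation argument. Let $T_{\max}$ denote the supremum of times $T>0$ for which an $H^{1}$-solution on $[-h,T)$ exists; I wish to show $T_{\max}=\infty$. Assume for contradiction $T_{\max}<\infty$. On $[-h,T_{\max})$ both $w$ and $v$ are bounded by the previous step, and inserting these bounds into the right-hand side $F(x_{t})$, the boundedness of $q$, $\mathcal{G}$, and local Lipschitz-continuity (hence local boundedness on the range just established) of $\beta$ show that $x'=F(x_{t})$ remains uniformly bounded on $[0,T_{\max})$. Therefore $x\in H^{1}(-h,T_{\max};\mathbb{R}^{2})$ extends continuously to the endpoint and its distributional derivative is in $L^{\infty}$. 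In particular the shifted prehistory $x_{T_{\max}}$ lies in $H^{1}(-h,0;\mathbb{R}^{2})$ with bounded derivative, so Theorem \ref{localExistence} applied at time $T_{\max}$ produces an extension onto $[-h,T_{\max}+\eta)$ for some $\eta>0$, contradicting maximality.

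Finally, under the extra assumption that $\beta$ is bounded by $M_{\beta}$, the same variation of constants computation simplifies: the Gr\"{o}nwall step is unnecessary, and direct integration gives
\begin{equation*}
|v(t)|\leq |\psi(0)|+M_{\beta}M_{\mathcal{G}}|\varphi(0)|\,\e^{-\mu t}\int_{0}^{t}\e^{(\mu+M_{q})s}\dx[s]\leq |\psi(0)|+\frac{M_{\beta}M_{\mathcal{G}}|\varphi(0)|}{\mu+M_{q}}\,\e^{M_{q}t},
\end{equation*}
which together with $|w(t)|\leq|\varphi(0)|\e^{M_{q}t}$ yields the desired exponential bound. The main obstacle, modest but worth care, is the continuation step: one must verify that at the putative endpoint $T_{\max}$ the extended prehistory still meets the hypotheses of Theorem \ref{localExistence} (bounded derivative in the $L^{\infty}$-sense), which is exactly where the uniform bounds on $q$, $\mathcal{G}$ and on $\beta$ over the bounded range of $v$ are used.
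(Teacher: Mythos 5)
Your proposal is correct and follows essentially the same route as the paper: the variation of constants formula combined with the uniform bounds on $q$, $d-D_{1}g$ (hence on $\mathcal{G}$) and the linear growth of $\beta$ gives the Gr\"{o}nwall-type a priori sup-norm bound, the bound on $x'=F(x_{t})$ supplies the bounded-derivative hypothesis needed to reapply \Cref{localExistence}, and boundedness of $\beta$ removes the Gr\"{o}nwall step to give the exponential estimate. The only cosmetic difference is that you phrase the continuation as a contradiction at $T_{\max}$ while the paper iterates the local theorem with growing $\alpha$ and $\delta$.
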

\begin{proof}
  We can solve \Cref{IVP} locally because of \Cref{localExistence}. Proceeding iteratively we start on $V_{\alpha}\cap \mathcal{B}_{\mathcal{C}([-h,0],\mathbb{R}^{2})}(\Phi,\delta)$ for some $\alpha>0$ and $\delta>0$. We then apply \Cref{localExistence} and obtain a solution $u=(w,v)$ up to some time $T_{0}\leq T$. Then we can take a new prehistory (up to $T_{0}$ instead of $0$) and a larger $\alpha_{\text{new}}\coloneq 2\alpha$ as well as some $\delta_{\text{new}}\geq \delta$ large enough to accomodate for the new prehistory and extend further. Iterating this process we arrive at maximal existence time $T$ or if we do not, then $\norm{u}_{\infty}$ or $\norm{u}_{\text{Lip}}$ approach infinity before reaching $T$. We show that neither can happen in finite time. Also note in this context that $y$ is well-defined as long as $\norm{u}_{\infty}$ remains finite by \Cref{yexpbdd}.
  \begin{itemize}[leftmargin=3ex]
    \item To exclude the first outcome we show that $u(t)$ exists for all large times $t$ and $u$ is even exponentially bounded if $\beta$ is bounded. For this consideration let $u$ be a maximal solution of the DDE \Cref{IVP}. We use the variation of constants formula from \Cref{VoC} and obtain from the first equation using the global bound $q\leq M_{q}$:
    \begin{equation*}
      \abs{w(t)}=\abs{\varphi(0)}\e^{\int_{0}^{t}q(v(s))\dx[s]}\leq \abs{\varphi(0)}\e^{\int_0^t M_{q}\dx[s]} = \abs{\varphi(0)}\e^{t M_{q}}
    \end{equation*}
          Before we can estimate $v(t)$ we want to estimate the function $\mathcal{G}$. For this we use the global bounds $g\leq K$ and $d-D_{1}g\leq M_{k}$:
          \begin{align*}
            \abs{\mathcal{G}(v_{t})}&=\abs{g(x_{2},v(t))}\e^{\int_0^{\tau(v_{t})} (d-D_{1}g)(y(s,v_{t}),v(t-s))\dx[s]}\\
                                    &\leq K\e^{\int_0^{\tau(v_{t})} M_{k} \dx[s]}\\
                                    &\leq K\e^{h M_{k}}\eqcolon M_{\mathcal{G}}
          \end{align*}
          Using the second equation from \Cref{VoC} we further calculate using the linear growth bound $\abs{\beta(x)}\leq C_{\beta}\abs{x}+a_{\beta}$:
    \begin{align*}
      \abs{v(t)}&=\e^{-\mu t}\abs[\bigg]{\psi(0)+\int_{0}^{t}\e^{\mu s}\beta(v(s-\tau(v_{s})))\mathcal{G}(v_{s})w(s-\tau(v_{s}))\dx[s]}\\
                &\leq \e^{-\mu t}\left[\abs{\psi(0)}+\int_{0}^{t}\e^{\mu s} \abs[\big]{\beta(v(s-\tau(v_{s})))\mathcal{G}(v_{s})w(s-\tau(v_{s}))}\dx[s]\right]\\
                &\leq \e^{-\mu t}\left[\abs{\psi(0)}+\int_{0}^{t}\e^{\mu s} \left(C_{\beta}\abs[\big]{v(s-\tau(v_{s}))}+a_{\beta}\right)M_{\mathcal{G}}\abs{\varphi(0)}\e^{(s-\tau(v_{s})) M_{q}}\dx[s]\right]\\
                &\leq \e^{-\mu t}\left[\abs{\psi(0)}+\abs{\varphi(0)}M_{\mathcal{G}} \int_{0}^{t}\e^{(\mu + M_{q}) s} \left(C_{\beta}\abs[\big]{v(s-\tau(v_{s}))}+a_{\beta}\right)\dx[s]\right]\\
                &\leq \e^{-\mu t}\left[\abs{\psi(0)}+\abs{\varphi(0)}M_{\mathcal{G}} \int_{0}^{t}\e^{(\mu + M_{q}) s} \left(C_{\beta}\underset{\kappa \in [0,s]}{\operatorname{sup}}\abs{v(\kappa)}+ \norm{\psi}_{\mathcal{C}([-h,0],\mathbb{R})} +a_{\beta}\right)\dx[s]\right]\\
                &\leq \e^{-\mu t}\abs{\psi(0)}+\abs{\varphi(0)}M_{\mathcal{G}}\e^{-\mu t}\left[\tfrac{a_{\beta}+\norm{\psi}_{\infty}}{\mu + M_{q}}\e^{(\mu + M_{q})t}+ \int_{0}^{t}\e^{(\mu + M_{q}) s} C_{\beta}\underset{\kappa \in [0,s]}{\operatorname{sup}}\abs{v(\kappa)}\dx[s]\right]\\
                &\leq C\e^{M_{q} t} + \abs{\varphi(0)}M_{\mathcal{G}}C_{\beta} \int_{0}^{t}\e^{(\mu + M_{q}) s} \underset{\kappa \in [0,s]}{\operatorname{sup}}\abs{v(\kappa)}\dx[s]
    \end{align*}
          Using Gr\"{o}nwall's lemma we obtain:
          \begin{equation*}
            \abs{v(t)}\leq \underset{\kappa \in [0,t]}{\operatorname{sup}}\abs{v(\kappa)}\leq C \e^{M_{q} t} + \int_0^t C \e^{M_{q} s}\abs{\varphi(0)} \e^{(\mu + M_{q}) s} C_{\beta}M_{\mathcal{G}}\e^{\int_s^t \abs{\varphi(0)}C_{\beta}M_{\mathcal{G}} \e^{(\mu + M_{q}) \sigma} \dx[\sigma]} \dx[s]
          \end{equation*}
          This shows that $v(t)$ exists for all times. Under the global bound $\abs{\beta}\leq M_{\beta}$ for $\beta$ we obtain the simpler estimate:
          \begin{align*}
      \abs{v(t)}&=\e^{-\mu t}\abs[\bigg]{\psi(0)+\int_{0}^{t}\e^{\mu s}\beta(v(s-\tau(v_{s})))\mathcal{G}(v_{s})w(s-\tau(v_{s}))\dx[s]}\\
                &\leq \e^{-\mu t}\left[\abs{\psi(0)}+\int_{0}^{t}\e^{\mu s}\abs[\big]{\beta(v(s-\tau(v_{s})))\mathcal{G}(v_{s})w(s-\tau(v_{s}))}\dx[s]\right]\\
                &\leq \e^{-\mu t}\abs{\psi(0)}+\int_{0}^{t}\e^{\mu s}M_{\beta}M_{\mathcal{G}}\abs{\varphi(0)}\e^{(s-\tau(v_{s})) M_{q}}\dx[s]\\
            &\leq c + C\e^{(\mu + M_{q})t}
          \end{align*}
  \item For the derivative we can estimate:
  \begin{alignat*}{2}
    \abs{u'(t)}^{2}&=\abs{F(u_{t})}^{2} \\
                   &= \abs[\big]{q(v_{t}(0))w_{t}(0)}^{2} + \abs[\big]{\beta(v_{t}(-\tau(v_{t})))w_{t}(-\tau(v_{t}))\mathcal{G}(v_{t}) - \mu v_{t}(0)}^{2}\\
                   &=\abs[\big]{q(v(t))w(t)}^{2} + \abs[\big]{\beta(v(t-\tau(v_{t})))w(t-\tau(v_{t}))\mathcal{G}(v_{t}) - \mu v(t)}^{2}
  \end{alignat*}
          Using the terminology from the previous part we can estimate:
          \begin{alignat*}{2}
            \abs{u'(t)}^{2}&=\abs[\big]{q(v(t))w(t)}^{2} + \abs[\big]{\beta(v(t-\tau(v_{t})))w(t-\tau(v_{t}))\mathcal{G}(v_{t}) - \mu v(t)}^{2}\\
                           &\leq M_{q}^{2}\abs[\big]{\varphi(0)}^{2}\e^{t 2 M_{q}} + 2\abs[\big]{\beta(v(t-\tau(v_{t})))w(t-\tau(v_{t}))\mathcal{G}(v_{t})}^{2} + 2 \mu^{2} \abs{v(t)}^{2}\\
            &\leq M_{q}^{2}\abs{\varphi(0)}^{2}\e^{t 2 M_{q}} + 2\abs[\big]{\beta(v(t-\tau(v_{t})))}^{2}M_{q}^{2}\abs{\varphi(0)}^{2}\e^{t 2 M_{q}}M_{\mathcal{G}}^{2} + 2 \mu^{2} \abs{v(t)}^{2}
          \end{alignat*}
          This expression exists for all large $t$.
  \end{itemize}
\end{proof}
Hence we have established local and global existence as well as uniqueness for \Cref{IVP}. To complete the discussion of well-posedness of \Cref{IVP} we refer to the next section.

\section{Outlook and Discussion}
The presented results improve on previous theory. There are two major articles of comparison. We have already pointed out the differences to \cite{Getto2016}. Here our results appear first as a weakening of regularity in the assumptions from continuous differentiable to (locally) Lipschitz-continuous functions. In practice, not having to verify continuous differentiability and instead being content with the Lipschitz-conditions stated in our \Cref{Assumptions} is a considerable simplification though. Second and more importantly we do not require compatibility of the initial prehistory with the right-hand side of \Cref{IVP} as one usually does if one pursues the solution manifold approach (as in \cite{Getto2016}). Since we are able to prove existence of infinite-time solutions this article implies in particular that the solution manifold (as introduced in \cite{Walther2003}) is not empty and the solution $u$ we obtain is still a continuously differentiable function as in the classical theory. Thus we benefit from all the results built on the concept of the solution manifold as well.\\
The other major article investigating the same stame cell model is \cite{Getto2021}. Here the results are remarkably similar to ours in terms of assumptions:
\begin{assumptions}[Assumptions of {\cite[thm.~4.10]{Getto2021}}]
  \label{AssumptionsGetto}
  Let $b,\epsilon,K\in \mathbb{R}$ satisying $x_{2}-x_{1}\in \left(0,\epsilon\frac{b}{K}\right)$, let $\mathcal{D}_{g}\coloneq\overline{\mathcal{B}(x_{2},b)}\times \mathbb{R}_{+}$ and let
  \begin{enumerate}[leftmargin=3ex]
    \item $q\colon \, \mathbb{R}_{+}\rightarrow \mathbb{R}$ be locally Lipschitz-continuous.
    \item $\beta\colon \, \mathbb{R}_{+}\rightarrow \mathbb{R}_{+}$ be locally Lipschitz-continuous.
    \item $d\colon \mathcal{D}_{g}\rightarrow \mathbb{R}$ be locally Lipschitz-continuous.
    \item $g\colon \,\mathcal{D}_{g}\rightarrow [\epsilon,K]$ satisfy the conditions
  \begin{itemize}[leftmargin=3ex]
    \item $g$ is locally Lipschitz-continuous and partially differentiable with respect to the first component.
    \item $D_{1}g$ is locally Lipschitz-continuous.
  \end{itemize}
  \end{enumerate}
  Let $h\coloneqq\frac{b}{K}$.
\end{assumptions}
The difference in  assumptions in terms of regularity is miniscule: Essentially we do require $g$ to be globally Lipschitz-continuous but $D_{1}g$ only locally Lipschitz-continuous with respect to one component, whereas in \cite{Getto2021} local Lipschitz-continuity suffices for $g$, but $D_{1}g$ needs to be locally Lipschitz with respect to both arguments. There is a more important difference here though: That of positivity. The corresponding local existence theorem reads:
\begin{theorem*}[{\cite[thm.~4.10]{Getto2021}}]
  Under the \Cref{AssumptionsGetto} the IVP \Cref{IVP} has a unique local non-continuable solution through any given prehistory $\Phi\in \{\Psi\in \mathcal{C}\left([-h,0],\mathbb{R}^{2}_{+}\right)\colon \norm{\Psi}_{\operatorname{Lip}}<\infty\}$.
\end{theorem*}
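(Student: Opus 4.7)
The plan is to adapt the classical Picard--Lindel\"{o}f strategy to the Lipschitz setting, bypassing the $H^{1}$-framework of \Cref{localFDE}. Specifically, I would rewrite \Cref{IVP} as the fixed point of the integral operator implicit in \Cref{VoC} and apply Banach's contraction principle on a closed subset of $\mathcal{C}^{0,1}([-h,T_{0}];\mathbb{R}_{+}^{2})$ for sufficiently small $T_{0}>0$; non-continuability would then follow by a routine maximal-interval argument.

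First I would establish sup-norm analogues of the auxiliary Lipschitz lemmata \Cref{tauLip}, \Cref{evtauLip} and \Cref{GLip}. The arguments should run in parallel, except that estimates of the form $\abs{\varphi(-t)-\psi(-t)}\leq\norm{\varphi-\psi}_{\mathcal{C}([-h,0],\mathbb{R})}$ take the place of the Sobolev embedding, and an a priori Lipschitz bound on the arguments plays the role of the $V_{\alpha}$ restriction. The exponential control of $y$ from \Cref{yexpbdd} and the local Lipschitz hypotheses on $g$, $d$ and $D_{1}g$ remain the workhorses for bounding $\mathcal{G}$ and the integrand of the $v$-equation on bounded subsets of Lipschitz functions.

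Second I would define the fixed-point operator $\mathcal{T}$ on
\begin{equation*}
  K \coloneqq \bigl\{(w,v)\in\mathcal{C}^{0,1}([-h,T_{0}];\mathbb{R}_{+}^{2}) : (w,v)|_{[-h,0]}=\Phi,\ \norm{(w,v)}_{\operatorname{Lip}}\leq\alpha,\ \norm{(w,v)-(\Phi_{1}(0),\Phi_{2}(0))}_{\infty}\leq\delta\bigr\},
\end{equation*}
with $\alpha,\delta>0$ chosen so that elements of $K$ remain in the positive cone and inside the domains of the coefficient functions; on $[-h,0]$ the operator reproduces $\Phi$, and on $[0,T_{0}]$ it is defined by the right-hand sides of \Cref{VoC}. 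Invariance $\mathcal{T}(K)\subseteq K$ and the contraction estimate both follow, upon shrinking $T_{0}$, by combining the bounds of step one with the explicit exponential form of \Cref{VoC}; positivity of the $w$-component is automatic from its exponential expression, and positivity of $v$ persists on $[0,T_{0}]$ for small $T_{0}$ since $\beta\mathcal{G}w\geq 0$.

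The main obstacle will be the state-dependent delay $\tau(v_{t})$: the functional $v\mapsto v(\,\cdot\,-\tau(v_{\cdot}))$ is only sup-norm Lipschitz in $v$ once the Lipschitz constant of $v$ is controlled, which forces the entire fixed-point scheme to live in $\mathcal{C}^{0,1}$ rather than in $\mathcal{C}$ and is the sole reason for the hypothesis $\norm{\Phi}_{\operatorname{Lip}}<\infty$ in the statement. Once local existence and uniqueness are secured, a standard iterative continuation (in the spirit of the proof of \Cref{globalExistence}) produces the unique non-continuable solution.
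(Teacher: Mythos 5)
You should first be aware that the paper does not prove this statement: it is quoted from \cite[thm.~4.10]{Getto2021} in the discussion section purely for comparison, and the paper reports that the original proof rests on contraction arguments combined with a relative-compactness-of-orbits result from \cite{Hale1969}. The paper's own existence result, \Cref{localExistence}, is obtained by a different mechanism --- verifying almost uniform Lipschitz-continuity of $F$ and invoking the abstract $H^{1}$ theorem \Cref{localFDE} together with \Cref{Restriction} --- and it removes the positivity hypothesis on $\Phi$ altogether. Your proposal is therefore a third route: a hands-on Picard iteration in a class of Lipschitz functions. It is viable, and it correctly isolates the central difficulty of state-dependent delays, namely that $\varphi\mapsto\varphi(-\tau(\varphi))$ is sup-norm Lipschitz only on sets with a uniform Lipschitz bound (exactly the role played by $V_{\alpha}$ in \Cref{evLip} and \Cref{evtauLip}). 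What your route buys is elementariness and the positivity assertion, which falls out of the variation-of-constants form of the operator; what the paper's route buys is reusability of the abstract theorem and the disposal of positivity.

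Two points need repair before the argument closes. First, the contraction cannot be run in the $\mathcal{C}^{0,1}$-norm: the Lipschitz seminorm of $\mathcal{T}u-\mathcal{T}\tilde{u}$ on $[0,T_{0}]$ is $\sup_{t}\abs{F(u_{t})-F(\tilde{u}_{t})}\leq L\norm{u-\tilde{u}}_{\infty}$, and no factor of $T_{0}$ appears here, so shrinking $T_{0}$ does not make it small. You must instead equip $K$ with the sup-norm metric --- it is complete, since the uniform Lipschitz bound and the pointwise constraints survive uniform limits --- and contract there; the fixed point is then automatically Lipschitz because it satisfies the integral equation. Second, as written the conditions $(w,v)|_{[-h,0]}=\Phi$ and $\norm{(w,v)-(\Phi_{1}(0),\Phi_{2}(0))}_{\infty}\leq\delta$ are incompatible unless $\Phi$ stays within $\delta$ of its endpoint value; the sup must be taken over $[0,T_{0}]$ only. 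Finally, note that under \Cref{AssumptionsGetto} the function $g$ is only locally Lipschitz, so the estimates for $y$ and $\tau$ in the spirit of \Cref{yexpbdd} and \Cref{tauLip} must be localized to the compact set $\overline{\mathcal{B}(x_{2},b)}\times[0,M]$, which is possible precisely because you work inside a fixed sup-norm ball. With these corrections your argument goes through.
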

The regularity assumption on $\Phi$ is very close to our own, the positivity assumption though is superfluous in our approach. The article \cite{Getto2021} relies heavily on contraction arguments appealing to a result of relative compactness of orbits of certain dynamical systems from \cite{Hale1969}, which we do not require at all. In fact our approach follows the well-trod footsteps of classical works in the field of ODEs, making it very accessible.\\
The major benefit of our approach is the use of weak solution theory that shortens proofs considerably and that one does not have to verify classical differentiability (of in general Banach space valued functions) or apply the implicit function theorem (for Banach spaces). Instead one simply applies a generalized version of the Picard-Lindel\"{o}f theorem. The mathematics required for the proof of that theorem rests on the theory of evolutionary equations in which differential equations are solved in exponentially weighted $L^{p}$-spaces giving rise to a time derivative with shifted spectrum that allows inversion. For more pertinent results in the context of DDEs we refer to \cite{Waurick2023}, for the general theory to \cite{Waurick2022}. The authors plan to apply this approach to PDEs with delay in the future.\\
In this article we have argued existence and uniqueness of solutions for the IVP \Cref{IVP}. For the classical property of well-posedness of a problem we have left out the discussion of continuous dependence of prehistories. In fact since the solution theory in \cite{Waurick2023} is based on the contraction mapping principle a corresponding continuous dependence result can be derived in a standard way, cf. \cite{Kalauch2014}.

The purpose of this article was thus twofold: For one, the reduction of assumptions of a concrete example with practical implications. On the other hand this model equation from cell biology serves as a nice demonstration of the theory we have developped so far applied to a non-trivial and non-standard example that allows us to present the benefits.\\

\bibliographystyle{abbrvurl}
\bibliography{references}

\end{document}